\newcommand{\arxiv}[1]{%
 \href{http://front.math.ucdavis.edu/#1}{ArXiv:#1}}
\newcommand\blue[1]{\textcolor{blue}{#1}}
\newtheorem{theorem}{Theorem}
\newaliascnt{lemma}{theorem}
\newtheorem{lemma}[lemma]{Lemma}
\newaliascnt{proposition}{theorem}
\newtheorem{proposition}[proposition]{Proposition}
\newaliascnt{corollary}{theorem}
\newtheorem{corollary}[corollary]{Corollary}
\newaliascnt{conjecture}{theorem}
\newaliascnt{openQ}{theorem}
\newaliascnt{quest}{theorem}
\newaliascnt{questx}{conjx}
\theoremstyle{definition}
\newaliascnt{defn}{theorem}
\newaliascnt{example}{theorem}
\newaliascnt{rem}{theorem}
\def\tagform@#1{\maketag@@@{\ignorespaces#1\unskip\@@italiccorr}}
\let\orgtheequation\theequation
\def\theequation{(\orgtheequation)}
\def\equationautorefname~{}
\newtheorem*{rem*}{Remarks}
\newtheorem*{example*}{Example}
\newcommand{\arctanh}{\operatorname{arctanh}}
\newcommand{\sign}{\operatorname{sign}}
\newcommand{\D}{\mathbb{D}}
\newcommand{\e}{\varepsilon}
\newcommand{\B}{{\mathbb B}}
\newcommand{\Bn}{{{\mathbb B}^n}}
\newcommand{\R}{{\mathbb R}}
\newcommand{\Rn}{{{\mathbb R}^n}}
\let\oldmarginnote\marginnote
\renewcommand{\marginnote}[1]{\oldmarginnote{\tiny \blue{#1}}}
\begin{document}
\title[Center of mass --- hyperbolic well-posedness]{Well-posedness of Hersch--Szeg\H{o}'s center of mass by hyperbolic energy minimization}

\keywords{Centroid, moment of inertia, shape optimization, spectral maximization}
\subjclass[2010]{\text{Primary 35P15. Secondary 28A75}}

	\begin{abstract}
The hyperbolic center of mass of a finite measure on the unit ball with respect to a radially increasing weight is shown to exist, be unique, and depend continuously on the measure. Prior results of this type are extended by characterizing the center of mass as the minimum point of an energy functional that is strictly convex along hyperbolic geodesics. A special case is Hersch's center of mass lemma on the sphere, which follows from convexity of a logarithmic kernel introduced by Douady and Earle.   
	\end{abstract}
	
\author[]{R. S. Laugesen}
\address{Department of Mathematics, University of Illinois, Urbana,
	IL 61801, U.S.A.}
\email{Laugesen@illinois.edu}

	\maketitle	

\medskip

\section{\bf Introduction} \label{sec:intro}

\subsection*{Motivation} 
The hyperbolic center of mass of a finite measure $\mu$ on the closed unit ball is the point $c$ for which $\int T_{-c}(y) \, d\mu(y) = 0$, where the M\"{o}bius transformation $T_x$ gives hyperbolic translation by $x$ on the ball. Equivalently, the pushforward measure has its center of mass at the origin: $\int y \, d[\big(T_{-c} \big)_{\! *} \, \mu] = 0$. 

This paper establishes well-posedness for generalized centers of mass involving radial weights, which arise in the proofs of sharp upper bounds on eigenvalues of the Laplacian in hyperbolic space and the sphere. These generalized centers of mass will be shown to exist, be unique, and depend continuously on the measure. 

Consider a radial weight $g(r)$ with $g(0)=0$, as illustrated in \autoref{fig:ggraph}. The task is to find conditions on $g$ and the measure $\mu$ on the closed ball $\overline{\Bn}$ under which the generalized hyperbolic center of mass equation 
\begin{equation} \label{eq:gcenter}
\int_{\overline{\Bn}} g(|T_x(y)|) \frac{T_x(y)}{|T_x(y)|} \, d\mu(y) = 0 
\end{equation}
has a solution $x \in \Bn$, and to determine when this point $x$ is unique and depends continuously on $\mu$. In the special case $g(r)=r$, condition \eqref{eq:gcenter} reduces to the original center of mass equation $\int_{\overline{\Bn}} T_x(y) \, d\mu(y) = 0$, in which case $x=-c$. 
\begin{figure}
\begin{center}
\includegraphics[scale=0.35]{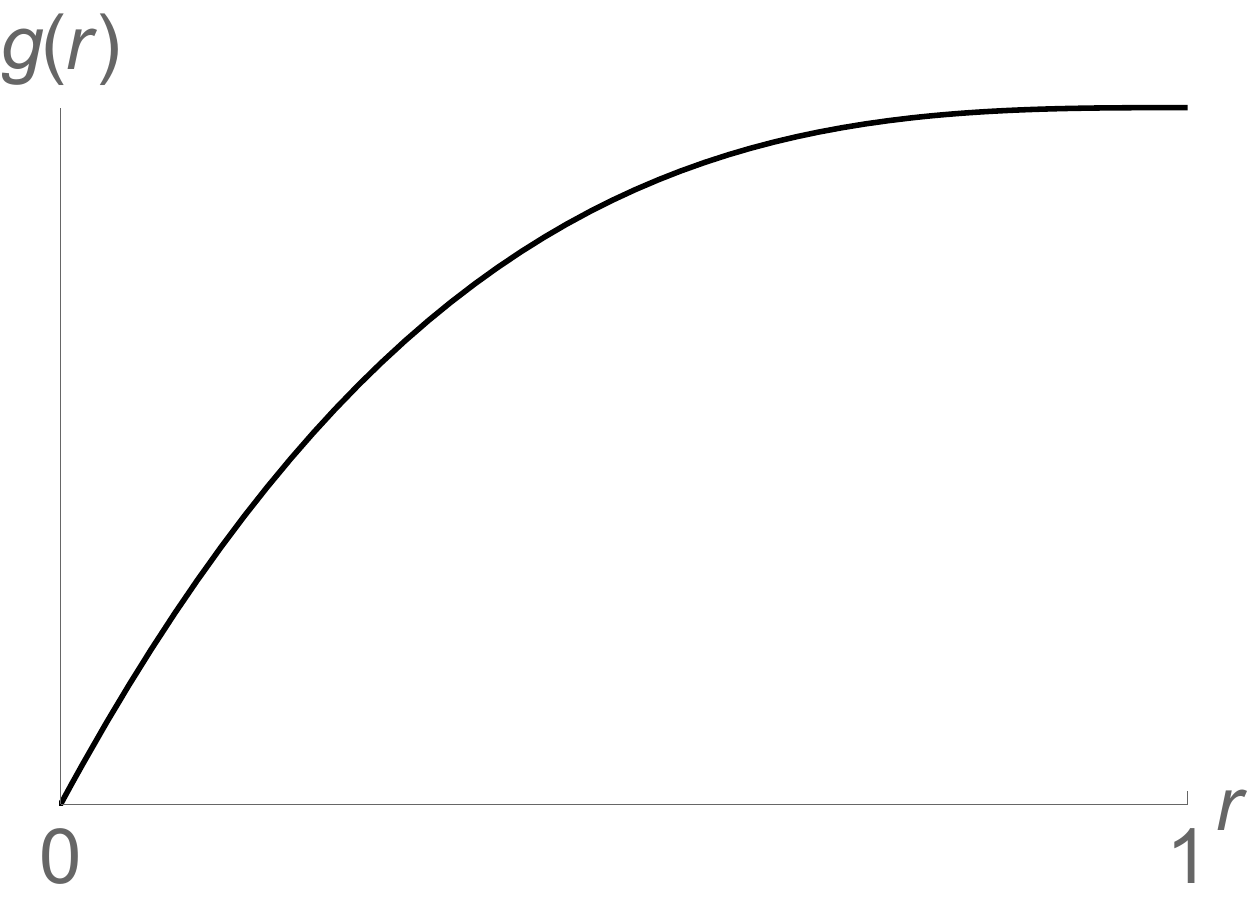} \qquad \includegraphics[scale=0.35]{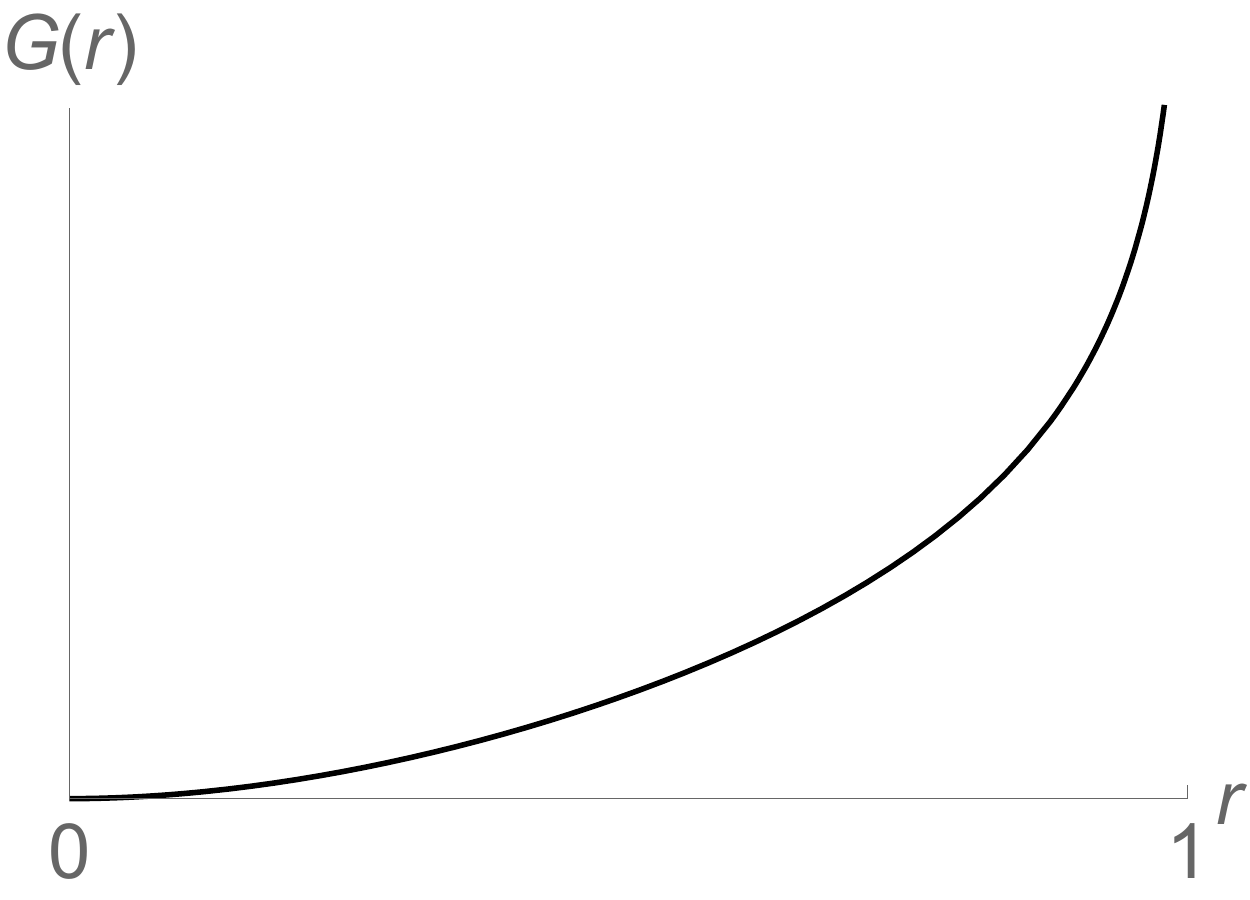}
\end{center}
\caption{\label{fig:ggraph}\textsc{Left:} Example of a radial weight $g(r)$, with $g(0)=0$. The existence results in this paper do not assume $g$ to be nonnegative or increasing. The uniqueness and continuous dependence results assume $g$ is positive and increasing. \textsc{Right:} The energy kernel $G$ is the hyperbolic antiderivative of $g$, and so $G$ is hyperbolically convex if $(1-r^2)G^\prime(r)=g(r)$ is increasing, as in the example shown.}
\end{figure}

Condition \eqref{eq:gcenter} may be expressed intrinsically in terms of hyperbolic distances and the exponential map, as  described in \autoref{sec:diffgeometric}. 

The existence theorems in this paper, which show equation \eqref{eq:gcenter} has a solution $x$, are motivated by work of Szeg\H{o} \cite[p.~351]{S54} in the open disk and Hersch \cite[p.\,1645]{H70} on the sphere. Szeg\H{o} needed to normalize the $g$-center of mass by conformal mapping, so that he could construct valid trial functions for his proof that the disk maximizes the second Neumann eigenvalue of the Laplacian among all simply connected planar domains of given area. Hersch similarly needed to move the center of mass to the origin for measures supported on the sphere, in order to show the round sphere maximizes the second eigenvalue of the Laplace--Beltrami operator, among metrics of given area. 

Underlying the Szeg\H{o} and Hersch existence proofs is the Brouwer fixed point theorem, or equivalent tools from topological index theory. The key to that existence result is that when $|x|=1$ the M\"{o}bius transformation is essentially constant, satisfying $T_x(y)=x$ for every $y$ except $y=-x$, and so the left side of \eqref{eq:gcenter} equals a multiple of $x$. In particular, that left side vector field points outward on the boundary of the ball, and hence must vanish somewhere inside the ball, giving a solution of \eqref{eq:gcenter}.

The uniqueness and continuous dependence results in the paper are motivated by work of Girouard, Nadirashvili and Polterovich. They proved well-posedness of the $g$-center of mass for measures on the $2$-dimensional disk \cite[Lemmas 2.2.3--2.2.5, 3.1.1]{GNP09}, \cite[Proposition 3.1]{GP10}, and also for measures on the sphere in all dimensions \cite[Proposition 4.1.5]{GNP09}. 

The current paper establishes well-posedness of the $g$-center of mass for measures on balls in all dimensions. The methods are analytic rather than topological in nature, relying on minimization of an explicitly defined energy functional. 

\subsection*{Overview of results} 

\autoref{th:hyperbolic} proves well-posedness of the $g$-center of mass for compactly supported measures in the open ball, assuming for existence that $\int_0^1 g(r) (1-r^2)^{-1} \, dr = \infty$, and assuming for uniqueness and continuous dependence that $g$ is strictly increasing, or else that $g$ is merely increasing and $\mu$ is not supported in a hyperbolic geodesic. 

\autoref{cor:weinhyp} deduces a Weinberger-type center of mass result for densities. This result was used for maximizing the second Neumann eigenvalue among bounded domains of given volume in hyperbolic space, by Chavel \cite[p.\ 80] {C80}; see also Ashbaugh and Benguria \cite[{\S}6]{AB95}. In those works, $g(r)$ is increasing out to a certain radius and then constant for all larger values of $r$, which is why it matters in this paper to treat weight functions $g$ that are non-strictly increasing. 

A ``folded'' variant in \autoref{cor:weinfoldhyp} recovers a result of Girouard, Nadirashvili and Polterovich \cite{GNP09}.

Measures on the closed ball are treated in \autoref{th:herschszego}, getting well-posedness of the center of mass when $g$ is increasing and $\mu$ is not supported in a hyperbolic geodesic. Point masses on the sphere are permitted, provided each point contributes less than half the total mass of the ball. 

Hersch's center of mass lemma for measures on the sphere is deduced in \autoref{cor:spheremu}, with a version for densities in \autoref{cor:hersch}. Measures on the sphere should be regarded as living on the boundary at infinity of the hyperbolic ball. 

The Szeg\H{o} situation involving simply connected domains in the plane is recovered in \autoref{cor:szego}, and Weinstock's analogous result for measures on planar Jordan curves \cite[pp.~748--749]{W54}, which he needed for estimating the first positive Steklov eigenvalue, appears in \autoref{cor:weinstock}. 

If uniqueness and continuous dependence are not needed and one aims merely for the existence of a center of mass point,  then as shown in \autoref{th:herschszegosigned}, one may handle signed measures. 

\subsection*{Summary of the energy method}  To find the center of mass in euclidean space one minimizes the moment of inertia $\int \frac{1}{2} |y-c|^2 \, d\mu(y)$ with respect to the choice of center point $c$. The analogous quantity to minimize for the  $g$-center of mass on the hyperbolic ball is the \emph{energy functional}
\[
E(x) = \int_\Bn G(|T_x(y)|) \, d\mu(y) , \qquad x \in \Bn , 
\]
where $G^\prime(r) = g(r)/(1-r^2)$. Clearly this energy is finite if the measure $\mu$ has compact support in the open ball. The gradient of the energy is the vector field on the left side of \eqref{eq:gcenter} (up to a factor; see formula \eqref{eq:hyperbolicgradient} later), and so critical points of the energy, in particular minimum points of the energy, are automatically centers of mass. 

To prove \autoref{th:hyperbolic}, for existence of an energy minimizing point we show the energy tends to infinity as $|x| \to 1$, while for uniqueness and continuous dependence we prove the energy is strictly hyperbolically convex.

This energy method can break down if the support of the measure extends out to the boundary sphere. Indeed, in that case $E(x)$ can equal $+\infty$ at every point. Such singularities will be avoided in \autoref{th:herschszego} for measures on the closed ball by renormalizing the energy: let
\[
\mathcal{E}(x) = \int_{\overline{\Bn}} K(x,y) \, d\mu(y) , \qquad x \in \Bn , 
\]
where the renormalized (or relative) kernel $K(x,y)$ is the continuous extension of $G(|T_x(y)|) - G(|y|)$ to the boundary sphere with respect to the $y$-variable. \autoref{sec:renormalizedenergy} develops the properties of this renormalized, extended kernel. 

Incidentally, the energy minimization approach in this paper suggests that the hyperbolic center of mass could be computed efficiently by a steepest descent or Newton algorithm. Such numerical methods would be particularly efficient when $g$ is  increasing, since then the energy is hyperbolically convex. Such gradient descent methods have been investigated by Afsari, Tron and Vidal \cite{ATV13} for $L^p$-Riemannian centers of mass (which are mentioned in \autoref{sec:diffgeometric} below). In contrast, the index theory approach to proving existence of a center of mass does not suggest any practical method for finding it. 

\subsection*{Energy method for Hersch's center of mass on the sphere} In the special case where the measure $\mu$ is supported entirely on the unit sphere (\autoref{cor:spheremu}), the energy method for proving Hersch's center of mass normalization is due to Douady and Earle \cite[Sections 2,11]{DE86} and Millson and Zombro \cite[Section 4]{MZ96}. Douady and Earle used the energy method for uniqueness, having already proved existence by index theory. Millson and Zombro showed how to get both existence and uniqueness from properties of the energy, yielding the following results, which are justified in their paper and in \autoref{sec:herschszegoproof} below.

Consider a Borel measure $\mu$ on $S^{n-1}, n \geq 2$, that satisfies $0 < \mu(S^{n-1}) < \infty$ and the point mass condition $\mu( \{ y \} ) < \frac{1}{2} \mu(S^{n-1})$ for all $y \in S^{n-1}$. The renormalized energy can be written explicitly in this situation as 
\begin{equation} \label{eq:introrenormalized}
\mathcal{E}_{\text{sphere}}(x) = \int_{S^{n-1}} \frac{1}{2} \log \frac{|x+y|^2}{1-|x|^2} \, d\mu(y) , \qquad x \in \Bn . 
\end{equation}
This energy is strictly hyperbolically convex, and it tends to infinity as $|x| \to 1$. Hence it possesses a unique minimizing point $x=x(\mu)$. The gradient vanishes at this critical point, which yields the hyperbolic center of mass equation $\int_{S^{n-1}} T_x(y) \, d\mu(y) = 0$. 

The logarithmic kernel in \eqref{eq:introrenormalized} is exactly the Busemann function for the boundary at infinity of the hyperbolic ball \cite[p.\,27]{DE86}, \cite[Section 4]{MZ96}, and indeed the kernel will be derived that way in formula \eqref{eq:distrelation}. 

\subsection*{Related literature for euclidean space, the sphere, Riemannian manifolds}

The center of mass results in this paper for the hyperbolic ball have analogues in euclidean space, as explored in my recent paper \cite{L20e}. 

The earliest continuous dependence result I know for Hersch's center of mass is due to Chang and Yang \cite[Appendix]{CY87}, in their work on prescribing the curvature of a metric on the sphere. Morpurgo \cite[p.\,362]{M96} later applied their approach in proving local minimality of the round sphere for the heat trace. 

Hersch's result continues to play a role in new applications, such as by Branson, Fontana and Morpurgo \cite{BFM13} for sharp Moser--Trudinger and Beckner--Onofri inequalities on the CR sphere, and by Frank and Lieb \cite{FL12a,FL12b} for the sharp Hardy--Littlewood--Sobolev inequality in euclidean space and the Heisenberg group.

The Riemannian center of mass on a nonpositively curved manifold arises from energy minimization with kernel $d(x,y)^p/p$. When specialized to the hyperbolic ball, these results give existence and uniqueness of the center of mass in our \autoref{th:hyperbolic} for the choice $g(r)=(\arctanh r)^{p-1}$. \autoref{sec:diffgeometric} explains the connection.

\section{\bf Notation and M\"{o}bius isometries} 

Write $\Bn$ for the open unit ball centered at the origin in $\Rn, n \geq 1$. Put
\[
s = s(r) = \frac{1}{2} \log \frac{1+r}{1-r} = \arctanh r , \qquad -1<r<1 ,
\] 
so that $ds=(1-r^2)^{-1} \, dr$ is the hyperbolic arclength element in the radial direction. 
Let $d_\Bn(x,y)$ be the hyperbolic distance between points $x$ and $y$ in the ball. In particular,  
\[
d_\Bn(x,0) = s(|x|) = \frac{1}{2} \log \frac{1+|x|}{1-|x|} 
\]
is the hyperbolic distance from $x$ to the origin. 
%
%
Recall that hyperbolic geodesics in the unit ball are either straight lines through the origin, or arcs of circles that meet the unit sphere at right angles. 

The center of mass condition involves a family of M\"{o}bius transformations 
\[
T_x : \overline{\Bn} \to \overline{\Bn}
\]
that are parameterized by $x \in \Bn$ and have the following properties: $T_0(y)=y$ is the identity, and when $x \neq 0$ the map $T_x(\cdot)$ is a M\"{o}bius self-map of the ball such that $T_x(0)=x$ and $T_x$ fixes the points $\pm x/|x|$ on the unit sphere. 

In $1$ dimension the maps are
\[
T_x(y) = \frac{x+y}{1+xy} , \qquad x \in (-1,1), \ y \in [-1,1] ,
\]
so that 
\begin{equation} \label{eq:1dimtanh}
T_{\tanh a}(\tanh b)=\tanh (a+b) , \qquad a,b \in \R .
\end{equation}
That is, $T_{\tanh a}$ acts as translation by $a$, with respect to hyperbolic arclength on the interval $(-1,1)$. 
In $2$ dimensions the maps can be written in complex notation as 
\[
T_x(y) = \frac{x+y}{1+\overline{x}y} , \qquad x \in \D , \ y \in \overline{\D} , 
\]
where $\D \simeq \B^2$ is the unit disk in the complex plane. 
%
%
In all dimensions \cite[eq.\,(26)]{A81}:
\begin{equation}
T_x(y) = \frac{(1+2x \cdot y+|y|^2)x +(1-|x|^2)y}{1+2x \cdot y + |x|^2|y|^2} , \qquad x \in \Bn, \ y \in \overline{\Bn} .\label{eq:Mobius}
\end{equation}
Observe $T_x(y)$ is a continuous function mapping $(x,y) \in \Bn \times \overline{\Bn}$ to $T_x(y) \in \overline{\Bn}$, and $T_x(\cdot)$ maps $\Bn$ to itself and $\partial \Bn$ to itself, with $T_x(0)=x$ and inverse $(T_x)^{-1} = T_{-x}$. Each $T_x$ is a hyperbolic isometry \cite[Section 2.7]{A81}.

\section{\bf Well-posedness results on the open ball}  \label{sec:resultshyperbolic}

Assume throughout this section that 
\[
\text{$g(r)$ is continuous and real valued for $0 \leq r < 1$, with $g(0)=0$,}
\]
and $\mu$ is a Borel measure on the open unit ball $\Bn, n \geq 1$, with
\[
0 < \mu(\Bn) < \infty .
\]
A typical radial profile $g$ is shown in \autoref{fig:ggraph}, although not all our results will assume $g$ is nonnegative and increasing like in the figure. 

Define $v : \Bn \to \Rn$ to be the radial vector field with magnitude $g$, meaning
\[
v(y) = g(|y|) \frac{y}{|y|} , \qquad y \in \Bn \setminus \{ 0 \} ,
\]
and $v(0)=0$. In other words, $v(r\hat{y})=g(r)\hat{y}$ whenever $0 \leq r < 1$ and $\hat{y}$ is a unit vector. Notice $v$ is continuous at the origin, since $g(0)=0$. 

Define a vector field $V : \Bn \to \Rn$ by integrating over M\"{o}bius translates of $v$:
\[
V(x) = \int_{\Bn} v(T_x(y)) \, d\mu(y) , \qquad x \in \Bn .
\]
This $V$ is well defined if the finite measure $\mu$ has compact support in $\Bn$. We seek a point $x_c$ for which $V(x_c)=0$, because then $x_c$ satisfies \eqref{eq:gcenter}, and so the antipodal point $-x_c$ is a $g$-center of mass for $\mu$. 
\begin{theorem}[Center of mass for compactly supported measures] \label{th:hyperbolic} \ \\
Assume the Borel measure $\mu$ has compact support in $\Bn$, with $0 < \mu(\Bn) < \infty$.

\noindent (a) [Existence] If $\int_0^1 g(r) (1-r^2)^{-1} \, dr = \infty$ then $V(x_c)=0$ for some $x_c \in \Bn$. 

\noindent (b) [Uniqueness] If either 
\begin{enumerate}[label=(\roman*),nosep]
\item $g$ is strictly increasing, or
\item $g$ is increasing, $g(r)>0$ whenever $0<r<1$, and $\mu$ is not supported in a hyperbolic geodesic, 
\end{enumerate}
then the point $x_c$ is unique.  

\noindent (c) [Continuous dependence] Suppose $\mu_k \to \mu$ weakly, where the $\mu_k$ are Borel measures all supported in a fixed compact subset of $\Bn$ and satisfying $0 < \mu_k(\Bn) < \infty$. If either (i) holds or else (ii) holds for $\mu$ and each $\mu_k$, then $x_c(\mu_k) \to x_c(\mu)$ as $k \to \infty$. 
\end{theorem}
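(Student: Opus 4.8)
The plan is to realize the vector field $V$ as a positive multiple of the gradient of the energy
\[
E(x) = \int_{\Bn} G(|T_x(y)|) \, d\mu(y), \qquad G'(r) = \frac{g(r)}{1-r^2},
\]
so that the zeros of $V$ are exactly the critical points of $E$; the gradient identity \eqref{eq:hyperbolicgradient} supplies the proportionality, and it then suffices to study the minimizers of $E$. The structural observation I would lean on is that, since each $T_x$ is a hyperbolic isometry with $T_x(-x)=0$, one has $|T_x(y)| = \tanh d_\Bn(y,-x)$, so that $G(|T_x(y)|) = \Phi(d_\Bn(y,-x))$ where $\Phi(d) := G(\tanh d)$ satisfies $\Phi'(d) = g(\tanh d)$. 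Thus $E$ is a Riemannian energy built from the single profile $\Phi$ and the distance to $-x$, and the hypotheses transfer directly: because $g(0)=0$ with $g$ increasing forces $g\ge 0$ on $[0,1)$, the profile $\Phi$ is nondecreasing, and it is convex precisely when $g$ is increasing.

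For existence in part (a), I would first note that $E$ is finite and continuous on $\Bn$, since $\mu$ has compact support $K\subset\Bn$ and $G$ is bounded on the compact range $\{|T_x(y)|:y\in K\}$. The divergence hypothesis $\int_0^1 g(r)(1-r^2)^{-1}\,dr=\infty$ says exactly that $G(r)\to+\infty$ as $r\to1$, so $G$ is bounded below on $[0,1)$. The crux is coercivity: as $T_x$ is an isometry with $d_\Bn(T_x(y),x)=d_\Bn(y,0)$, the image $T_x(y)$ stays within a fixed hyperbolic distance of $x$ for all $y\in K$, and as $|x|\to1$ this hyperbolic ball collapses, in the Euclidean sense, to a boundary point; hence $|T_x(y)|\to1$ uniformly for $y\in K$ and $E(x)\to+\infty$. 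Continuity together with coercivity produces a minimizer $x_c\in\Bn$, at which $\nabla E(x_c)=0$ and therefore $V(x_c)=0$.

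For uniqueness in part (b), I would show $E$ is strictly convex along every hyperbolic geodesic, which permits at most one critical point and hence at most one zero of $V$. Since $x\mapsto -x$ is an isometry it is equivalent to treat $\int_{\Bn}\Phi(d_\Bn(y,c))\,d\mu(y)$ with $c=-x$. Fixing a geodesic $t\mapsto\gamma(t)$ and writing $h_y(t)=d_\Bn(\gamma(t),y)$, each $h_y$ is convex by nonpositive curvature, is never locally constant, and is \emph{strictly} convex unless $y$ lies on the geodesic line through $\gamma$; the composition $\Phi\circ h_y$ is convex because $\Phi$ is nondecreasing and convex. Under (i), $\Phi'=g\circ\tanh$ is strictly increasing, so $\Phi$ is strictly convex, and combined with $h_y$ having no interval of constancy this makes $\Phi\circ h_y$ strictly convex for \emph{every} $y$. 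Under (ii), $\Phi$ is merely convex but $\Phi'=g\circ\tanh>0$ on $(0,\infty)$, so for $y$ off the geodesic line the strict convexity of $h_y$ survives composition with the strictly increasing $\Phi$; since $\mu$ is not supported in any geodesic, such $y$ carry positive mass, and integrating kernels that are convex everywhere and strictly convex on a set of positive $\mu$-measure yields strict convexity. Either way the minimizer, hence the zero of $V$, is unique. I expect this step---upgrading convexity to strict convexity, in particular the geometric input that hyperbolic distance to an off-geodesic point is strictly convex, and the use of the ``not supported in a geodesic'' hypothesis to supply positive mass where strictness holds---to be the main obstacle.

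For continuous dependence in part (c), write $E_k$ for the energy of $\mu_k$. Testing weak convergence against the constant $1$ gives $\mu_k(K)\to\mu(K)>0$, and the bound $E_k(x)\ge\big(\inf_{y\in K}G(|T_x(y)|)\big)\mu_k(K)$, whose right side tends to $+\infty$ as $|x|\to1$, confines every minimizer $x_c(\mu_k)$, for large $k$, to a fixed compact ball $\{|x|\le\rho\}\subset\Bn$. Because $(x,y)\mapsto G(|T_x(y)|)$ is jointly continuous on the compact set $\{|x|\le\rho\}\times K$, the test functions $\{\,y\mapsto G(|T_x(y)|):|x|\le\rho\,\}$ form an equicontinuous, uniformly bounded, hence relatively compact family in $C(K)$, over which weak convergence is uniform; thus $E_k\to E$ uniformly on $\{|x|\le\rho\}$. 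Finally, any subsequential limit $x_*$ of $x_c(\mu_k)$ satisfies $E(x_*)\le E(x)$ for all $x$, by passing to the limit in $E_k(x_c(\mu_k))\le E_k(x)$, so $x_*=x_c(\mu)$ by the uniqueness from part (b); as every subsequence has a further subsequence with the same limit, the whole sequence converges, giving $x_c(\mu_k)\to x_c(\mu)$.
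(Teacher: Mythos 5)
Your proposal is correct and takes essentially the same route as the paper: it is the paper's ``geometric method,'' writing the energy as $\int \widetilde{G}(d_\Bn(y,-x))\,d\mu(y)$ (your $\Phi$ is the paper's $\widetilde{G}$), proving existence by coercivity, uniqueness by strict hyperbolic convexity of the composed kernel with the same case analysis of where strictness can fail under (i) versus (ii), and continuous dependence by uniform convergence of the energies on compact sets. The only differences are cosmetic: in part (c) you use a subsequence-extraction argument with uniform coercivity where the paper localizes near $x_c(\mu)$ via a small sphere, and you quote as known the two computational facts the paper proves explicitly, namely the gradient identity \eqref{eq:hyperbolicgradient} and the strict hyperbolic convexity of the distance function (\autoref{le:hypgeo}).
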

The theorem is proved in \autoref{sec:hyperbolicproof}. For $g$ strictly increasing and bounded in $2$ dimensions, the theorem is due to Girouard, Nadirashvili and Polterovich \cite[Lemmas 2.2.3--2.2.5, 3.1.1]{GNP09}, \cite[Proposition 3.1]{GP10}. Their measures were permitted to take support in the whole closed disk, provided the boundary circle has no point masses. For more about closed disks and balls, see \autoref{th:herschszego} below. Girouard, Nadirashvili and Polterovich relied on topological methods (winding numbers) to prove existence, and obtained uniqueness through some ingenious estimates. See also the Riemannian center of mass results in \autoref{sec:diffgeometric}, for work of Grove, Karcher, Afsari and others.

\begin{rem*}
1. The integral condition in part (a) means $\int_0^\rho g(r)(1-r^2)^{-1} \, dr \to \infty$ as $\rho \to 1$. This hypothesis certainly holds if $g(1)>0$, but also holds for some functions that vanish at $r=1$, such as $g(r) = r/[\log 2/(1-r)]$.

2. The hypothesis that $\mu$ not be supported in a hyperbolic geodesic, in part (b)(ii), means $\mu(\Bn \setminus \gamma)>0$ for every hyperbolic geodesic $\gamma$ in the unit ball.

3. Uniqueness can fail in part (b)(ii) when the measure $\mu$ is supported in a hyperbolic geodesic, as the following example shows already in $1$ dimension. Take $g(r) = \min(r,1/2)$, so that $g$ increases from $0$ to $1/2$ for $r \in [0,1/2]$ and is constantly $1/2$ for $r \in [1/2,1)$, and suppose $\mu=\delta_a+\delta_b$ is a sum of point masses at locations $a$ and $b$ with $a \in (-1,-1/2)$ and $b \in (1/2,1)$. Then whenever $x$ is close enough to $0$ that $T_x(a) < -1/2$ and $T_x(b) > 1/2$, one has 
\[
V(x) = v(T_x(a)) + v(T_x(b)) = g(|T_x(a)|)  \cdot (-1) + g(|T_x(b)|) \cdot 1 = -1/2+1/2=0 .
\]
Thus $V$ vanishes for a whole interval of $x$ values close to $0$, destroying any hope of uniqueness. 

4. Uniqueness can also fail in \autoref{th:hyperbolic}(b) when $g$ is not increasing. For example, let $g(r)=\min(s(r),1/s(r))$, so that $g$ first increases and then decreases. In dimension $n=1$, choose $\mu=\delta_{\tanh 2} + \delta_{-\tanh 2}$ to be a sum of point masses at $\pm \tanh 2$. For $a \in [0,2]$ one finds with the help of the hyperbolic translation formula \eqref{eq:1dimtanh} that $V(\tanh a) = g(\tanh(2+a))-g(\tanh(2-a))$. Hence $V(0)=0,V(\tanh 1)=g(\tanh 3)-g(\tanh 1)=-2/3, V(\tanh 2)=g(\tanh 4)-g(0)=1/4$, and so $V(x)=0$ at $x=0$ and also at some $x$ between $\tanh 1$ and $\tanh 2$. Thus $V$ vanishes at more than one point. This counterexample extends immediately to higher dimensions, and there the measure can be smeared out symmetrically so it is not supported in a line.  

5. Continuous dependence can fail in part (c) when the measures are not all contained in a compact subset of $\Bn$. For example, in $1$ dimension consider $g(r)=s(r)$ and the measure $\mu_k=(1-1/k)\delta_0+(1/k)\delta_{\tanh (k^2)}$. Then $x_c(\mu_k)=-\tanh k$ since 
\[
V(-\tanh k) =  (1-1/k) s(\tanh k) (-1) + (1/k) s(\tanh(k^2-k)) (+1) = 0 ,
\]
where we used the hyperbolic translation formula \eqref{eq:1dimtanh}. Hence $x_c(\mu_k) \to -1$ as $k \to \infty$, even though $\mu_k$ converges weakly to $\mu=\delta_0$, which has $x_c(\mu)=0$.

6. The ``fixed compact support'' assumption in part (c) can be dropped if $g$ is continuous up to $r=1$, by \autoref{th:herschszego}(c) below. 
\end{rem*}

For the next corollary, recall the volume factor on the hyperbolic ball is $(1-|y|^2)^{-n}$. 
\begin{corollary}[Weinberger type orthogonality for a hyperbolic domain] \label{cor:weinhyp}
Suppose $\Omega$ is an open set with compact closure in $\Bn$ and $f$ is nonnegative on $\Omega$ with $0 < \int_\Omega f(y) (1-|y|^2)^{-n} dy < \infty$. If $\int_0^1 g(r) (1-r^2)^{-1} \, dr = \infty$ then a point $x \in \Bn$ exists such that each component of the vector field $v \circ T_x$ is orthogonal to $f$ with respect to the hyperbolic metric, meaning
\[
\int_\Omega v(T_x(y)) f(y) (1-|y|^2)^{-n} dy = 0 .
\]
If in addition $g$ is increasing with $g(r)>0$ whenever $0<r<1$ then the point $x$ is unique.  
\end{corollary}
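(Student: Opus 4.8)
The plan is to recognize this corollary as a direct specialization of \autoref{th:hyperbolic} to an absolutely continuous measure. First I would introduce the Borel measure $\mu$ on $\Bn$ whose density with respect to Lebesgue measure on $\Omega$ is $f(y)\,(1-|y|^2)^{-n}$, and which vanishes outside $\Omega$. Since $\overline{\Omega}$ is a compact subset of the open ball, the support of $\mu$ lies in a fixed compact set; the weight $(1-|y|^2)^{-n}$ is bounded there, and the hypothesis $0<\int_\Omega f(y)(1-|y|^2)^{-n}\,dy<\infty$ shows $0<\mu(\Bn)<\infty$. Thus $\mu$ meets all the standing assumptions of \autoref{th:hyperbolic}, and with this choice the vector field $V(x)=\int_\Bn v(T_x(y))\,d\mu(y)$ of that theorem is exactly the integral $\int_\Omega v(T_x(y))\,f(y)\,(1-|y|^2)^{-n}\,dy$ whose vanishing is asserted.

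For existence, the hypothesis $\int_0^1 g(r)(1-r^2)^{-1}\,dr=\infty$ is precisely the assumption of \autoref{th:hyperbolic}(a), which therefore supplies a point $x=x_c\in\Bn$ with $V(x_c)=0$, giving the claimed hyperbolic orthogonality.

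For uniqueness I would verify the hypotheses of \autoref{th:hyperbolic}(b). We are given that $g$ is increasing with $g(r)>0$ for $0<r<1$, so the only remaining point is to check that $\mu$ is not supported in a hyperbolic geodesic, as required by alternative (ii). Here the absolute continuity of $\mu$ is decisive: any hyperbolic geodesic $\gamma$ is an arc of a circle or a line segment, hence a one-dimensional set of $n$-dimensional Lebesgue measure zero once $n\geq 2$, so that $\mu(\gamma)=\int_{\gamma\cap\Omega} f(y)\,(1-|y|^2)^{-n}\,dy=0<\mu(\Bn)$. No geodesic then carries the full mass of $\mu$, alternative (ii) applies, and \autoref{th:hyperbolic}(b) yields uniqueness of $x$.

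The one delicate point, which I expect to be the main obstacle, is exactly this measure-zero argument. In dimension $n=1$ the whole interval $(-1,1)$ is itself a geodesic, so the argument collapses and uniqueness can genuinely fail for a merely increasing weight (the density analogue of the one-dimensional counterexample in the remarks following \autoref{th:hyperbolic}). For $n=1$ one must instead invoke alternative (i) by taking $g$ strictly increasing, or restrict the uniqueness claim to $n\geq 2$; apart from this dimensional caveat the corollary is a routine transcription of \autoref{th:hyperbolic} into the language of hyperbolic orthogonality.
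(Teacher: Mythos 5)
Your proposal is correct and takes exactly the paper's route: the paper also applies \autoref{th:hyperbolic} parts (a) and (b)(ii) to the measure $d\mu(y) = f(y)\,(1-|y|^2)^{-n}\,dy\big|_\Omega$, disposing of hypothesis (ii) with the one-line remark that a density times Lebesgue measure is not supported in any hyperbolic geodesic. Your dimensional caveat is a genuine refinement that the paper's one-liner glosses over: since the standing assumptions of that section allow $n\geq 1$, and in one dimension the whole interval $(-1,1)$ is itself a geodesic, the measure-zero argument collapses there, and indeed a density version of Remark 3 after \autoref{th:hyperbolic} (e.g.\ $g(r)=\min(r,1/2)$ with equal hyperbolic masses concentrated near $\pm 3/4$) shows the uniqueness conclusion can actually fail for $n=1$ with a merely increasing weight. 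So your reading --- uniqueness via alternative (ii) holds for $n\geq 2$, while $n=1$ requires $g$ strictly increasing, i.e.\ alternative (i) --- is more careful than the printed proof, which implicitly assumes $n\geq 2$ at this step.
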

\begin{proof} 
Apply \autoref{th:hyperbolic} parts (a) and (b)(ii) with $d\mu(y) = f(y) (1-|y|^2)^{-n} dy \big|_\Omega$. This measure $\mu$ equals a density times Lebesgue measure on $\Omega$, and so is not supported in any hyperbolic geodesic.
\end{proof}
The existence statement in the corollary is a hyperbolic analogue of a euclidean result by Weinberger \cite{W56}. It was mentioned in passing by Chavel \cite{C80} and Ashbaugh and Benguria \cite[{\S}6]{AB95}. The first detailed proof of existence seems to have been presented later by Benguria and Linde \cite[Theorem 6.1]{BL07}. Both Chavel and Ashbaugh--Benguria needed the case $f \equiv 1$, as part of a proof that the ball maximizes the second Neumann eigenvalue among bounded domains of given hyperbolic volume. Benguria and Linde pursued an analogous PPW-type result for the second Dirichlet eigenvalue, for which they needed nonconstant $f$. 

A well-posedness result involving hyperbolic folds can be developed as follows. Let 
\[
H_p = \{ y \in \Bn :  y \cdot p \leq 0 \} , \qquad p \in S^{n-1} ,
\]
be the closed hyperbolic halfball with normal vector $p$, whose boundary relative to the hyperbolic ball is $\partial H_p = \{ y \in \Bn :  y \cdot p = 0 \}$. Define
\[
H = H(p,t) = T_{pt}(H_p) , \qquad p \in S^{n-1}, \quad t \in (-1,1) ,
\]
be the image of that halfball under the M\"{o}bius translation $T_{pt}$. The boundary relative to the hyperbolic ball is $\partial H(p,t) = T_{pt}(\partial H_p)$. After writing
\[
R_p(y) = y - 2(y \cdot p) p
\]
for the reflection map across $\partial H_p$, the hyperbolic reflection across $\partial H(p,t)$ is defined by conjugation as
\[
R=R_{p,t}(y) = T_{pt} \circ R_p \circ (T_{pt})^{-1} .
\]
Define the ``fold map'' onto $H$ by 
\[
F(y) = 
\begin{cases}
y & \text{if\ } y \in H, \\
R(y) & \text{if\ } y \in \Bn \setminus H ,
\end{cases}
\]
so that the fold map fixed each point in $H$ and maps each point in $\Bn \setminus H$ to its hyperbolic reflection across $\partial H$.
\begin{corollary}[Orthogonality with a hyperbolic fold] \label{cor:weinfoldhyp}
Suppose $\Omega \Subset \Bn$ and $f$ is nonnegative on $\Omega$ with $0 < \int_\Omega f(y) (1-|y|^2)^{-n} dy < \infty$. If $\int_0^1 g(r)(1-r^2)^{-1} \, dr = \infty$, and $H$ and its fold map $F$ are given, then a point $x \in \Bn$ exists such that each component of the vector field $v \circ T_x \circ F$ is orthogonal to $f$, meaning
\[
\int_\Omega v(T_x \circ F(y)) f(y) (1-|y|^2)^{-n} dy = 0 .
\]
If in addition $g$ is increasing with $g(r)>0$ whenever $0<r<1$ then the point $x=x(H)$ is unique and depends continuously on the parameters $(p,t)$ of $H$.  
\end{corollary}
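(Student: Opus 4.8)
The plan is to read this as \autoref{th:hyperbolic} applied to a pushforward measure. Writing $d\mu(y) = f(y)(1-|y|^2)^{-n}\, dy\big|_\Omega$ as in \autoref{cor:weinhyp}, let $\nu = F_*\mu$ be the pushforward under the fold map. Because $F$ equals the identity on $H$ and the hyperbolic reflection $R$ on $\Bn \setminus H$, and $R$ fixes $\partial H$ pointwise, the two pieces agree along $\partial H$, so $F$ is continuous on $\Bn$. The change-of-variables identity
\[
\int_\Omega v\big(T_x(F(y))\big)\, f(y)(1-|y|^2)^{-n}\, dy = \int_{\Bn} v\big(T_x(z)\big)\, d\nu(z) = V_\nu(x)
\]
shows that the claimed orthogonality is exactly the equation $V_\nu(x) = 0$ for the measure $\nu$. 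Since $\overline{\Omega}$ is compact in $\Bn$ and $F$ is continuous, the support $F(\overline{\Omega})$ of $\nu$ is compact in $\Bn$, while $\nu(\Bn) = \mu(\Omega) > 0$; thus $\nu$ satisfies the standing hypotheses. Existence of $x$ is then immediate from \autoref{th:hyperbolic}(a), using $\int_0^1 g(r)(1-r^2)^{-1}\, dr = \infty$.

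For uniqueness I would check hypothesis (b)(ii), that $\nu$ is supported in no hyperbolic geodesic. The point is that $F$ is a piecewise hyperbolic isometry, so
\[
\nu = \mu\big|_H + R_*\big(\mu\big|_{\Bn \setminus H}\big),
\]
a sum of two measures each absolutely continuous with respect to Lebesgue measure --- the first by restriction, the second because $R$ is a diffeomorphism. Hence $\nu$ is absolutely continuous, so $\nu(\gamma) = 0$ for every geodesic $\gamma$ (a null set), and since $\nu(\Bn) > 0$ the measure $\nu$ is not supported in a geodesic. With $g$ increasing and positive, \autoref{th:hyperbolic}(b)(ii) yields uniqueness of $x = x(H)$.

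For continuous dependence, given $(p_k, t_k) \to (p,t)$ I would show $\nu_{(p_k,t_k)} \to \nu_{(p,t)}$ weakly and invoke \autoref{th:hyperbolic}(c). The crucial step is joint continuity of $(p,t,y) \mapsto F_{p,t}(y)$, including across the fold locus. Setting $z = T_{-pt}(y)$, the fold can be written
\[
F_{p,t}(y) = T_{pt}\big(z - 2(z \cdot p)_+\, p\big),
\]
with $(\cdot)_+$ the positive part; since $T$ is jointly continuous and the positive part is continuous, $F_{p,t}(y)$ is jointly continuous in all variables. Over a compact neighborhood $N$ of $(p,t)$ the image $F(N \times \overline{\Omega})$ is then a compact subset of $\Bn$, giving uniform compact support, and for continuous $\psi$ dominated convergence gives $\int \psi(F_{p_k,t_k}(y))\, d\mu(y) \to \int \psi(F_{p,t}(y))\, d\mu(y)$, which is the weak convergence $\nu_{(p_k,t_k)} \to \nu_{(p,t)}$. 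Each of these pushforwards is absolutely continuous, so hypothesis (ii) holds throughout, and part (c) gives $x(H(p_k,t_k)) \to x(H(p,t))$.

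I expect the main obstacle to be the bookkeeping around the fold rather than any new analytic input: one must confirm that $F$ is genuinely continuous so that $\nu$ is well behaved, and, for the last part, establish joint continuity of $F_{p,t}$ across $\partial H$ together with the resulting uniform compactness of supports. Both are handled by the positive-part representation above, after which the entire statement reduces to \autoref{th:hyperbolic}.
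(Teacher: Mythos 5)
Your proposal is correct and follows essentially the same route as the paper: apply \autoref{th:hyperbolic}(a), (b)(ii), and (c) to the pushforward of $f(y)(1-|y|^2)^{-n}\,dy\big|_\Omega$ under the fold map, noting the pushforward is absolutely continuous (hence not supported in a geodesic), that the images $F(\overline{\Omega})$ lie in a fixed compact subset of $\Bn$, and that weak convergence of the pushforwards follows from continuity of $(p,t,y)\mapsto F_{p,t}(y)$ plus dominated convergence. Your explicit positive-part formula $F_{p,t}(y)=T_{pt}\bigl(z-2(z\cdot p)_+\,p\bigr)$ with $z=T_{-pt}(y)$ is a clean way to verify the joint continuity that the paper asserts more briefly, but it does not change the argument.
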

The proof is in \autoref{sec:weinfoldhypproof}. A ``folded'' corollary of this nature was obtained in $2$ dimensions by Girouard, Nadirashvili and Polterovich \cite[{\S}2.5]{GNP09} for maximizing the third Neumann eigenvalue (the second positive eigenvalue) of simply connected domains. Their construction was employed again by Girouard and Laugesen \cite[Lemma 9]{GL19} when maximizing the third Robin eigenvalue. More precisely, all these authors used not \autoref{cor:weinfoldhyp} but rather an analogous corollary on the whole closed disk that can be deduced from \autoref{th:herschszego}. 

\section{\bf Well-posedness results on the closed ball}  \label{sec:resultsclosedball}

Assume throughout this section that $g$ extends continuously to $r=1$: 
\[
\text{$g(r)$ is continuous and real valued for $0 \leq r \leq 1$, with $g(0)=0$,}
\]
and that $\mu$ is a Borel measure on the closed unit ball $\overline{\Bn}, n \geq 1$, with
\[
0 < \mu(\overline{\Bn}) < \infty .
\]
The vector field $v(r\hat{y})=g(r)\hat{y}$ extends to the closed ball ($0 \leq r \leq 1$), and so $V$ can be defined by integration with respect to $\mu$ over the closed ball:
\[
V(x) = \int_{\overline{\Bn}} v(T_x(y)) \, d\mu(y) , \qquad x \in \Bn .
\]
Again we seek a point $x_c$ at which $V(x_c)=0$. 

By the ``closure'' of a hyperbolic geodesic we mean its euclidean closure, consisting of the geodesic together with its endpoints on the unit sphere.  
\begin{theorem}[Center of mass on the closed ball] \label{th:herschszego} Assume $\mu$ is a Borel measure satisfying $0 < \mu(\overline{\Bn}) < \infty$ and 
\begin{equation} \label{eq:pointmass}
\mu( \{ y \} ) < \frac{1}{2} \mu(\overline{\Bn}) , \qquad y \in \partial \Bn .
\end{equation}
\noindent (a) [Existence] If $g(1)>0$ then $V(x_c)=0$ for some $x_c \in \Bn$. 

\noindent (b) [Uniqueness] If either 
\begin{enumerate}[label=(\roman*),nosep]
\item $g$ is strictly increasing, or
\item $g$ is increasing, $g(r)>0$ whenever $0<r<1$, and $\mu$ is not supported in the closure of a hyperbolic geodesic, 
\end{enumerate}
then the point $x_c$ is unique.  

\noindent (c) [Continuous dependence] Suppose $\mu_k \to \mu$ weakly, where the $\mu_k$ are Borel measures satisfying \eqref{eq:pointmass} and $0 < \mu_k(\overline{\Bn}) < \infty$ for all $k$. If either (i) holds or else (ii) holds for $\mu$ and each $\mu_k$, then $x_c(\mu_k) \to x_c(\mu)$ as $k \to \infty$. 
\end{theorem}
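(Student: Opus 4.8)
The plan is to establish well-posedness on the closed ball by transporting the work already done on the open ball (\autoref{th:hyperbolic}) to the renormalized energy functional $\mathcal{E}(x) = \int_{\overline{\Bn}} K(x,y) \, d\mu(y)$, whose kernel $K(x,y)$ is the continuous extension of $G(|T_x(y)|)-G(|y|)$ to the boundary sphere in the $y$-variable, as developed in \autoref{sec:renormalizedenergy}. The crucial structural facts I would invoke are that subtracting the constant (in $x$) term $G(|y|)$ does not change the $x$-gradient, so that $\nabla_x \mathcal{E}$ is still a positive multiple of $V$, and that $K$ inherits strict hyperbolic convexity in $x$ from convexity of $G$ when $g$ is increasing. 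With these in hand, existence reduces to showing $\mathcal{E}(x) \to +\infty$ as $|x| \to 1$, uniqueness reduces to strict convexity along geodesics, and continuous dependence reduces to a $\Gamma$-convergence/lower-semicontinuity argument for the family $\mathcal{E}_{\mu_k}$.

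For part (a), I would fix a boundary point $p$ and let $x \to p$ radially; the key computation is the boundary behavior of the kernel. On the sphere the extended kernel has the Busemann/logarithmic form seen in \eqref{eq:introrenormalized}, so $K(x,y) \to +\infty$ as $x \to p$ for every $y \neq p$, while at the single point $y=p$ the kernel tends to $-\infty$. The point mass condition \eqref{eq:pointmass}, namely $\mu(\{p\}) < \tfrac{1}{2}\mu(\overline{\Bn})$, is exactly what guarantees the $+\infty$ contribution from the bulk of the mass outweighs the $-\infty$ contribution concentrated at $p$, forcing $\mathcal{E}(x) \to +\infty$. I expect this competition between the boundary blow-up and the point-mass singularity to be the main obstacle: one must quantify the rates (the positive part growing like $-\log(1-|x|)$ against the negative part) carefully and use Fatou's lemma or a dominated-below argument to pass the limit through the integral. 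Given coercivity, continuity of $\mathcal{E}$ on $\Bn$ yields an interior minimizer $x_c$, and vanishing of the gradient gives $V(x_c)=0$.

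For part (b), strict hyperbolic convexity of $\mathcal{E}$ along any geodesic gives at most one critical point. When $g$ is strictly increasing (case (i)), $G$ is strictly hyperbolically convex and the strict inequality survives the integration for every $y$, so I would get strict convexity immediately. In case (ii), where $g$ is only increasing, the second derivative of $K(\cdot,y)$ along a geodesic is nonnegative but can vanish for those $y$ lying on the geodesic; the hypothesis that $\mu$ is not supported in the euclidean closure of a geodesic ensures a positive-measure set of $y$ contributes a strict inequality, so $\mathcal{E}$ is strictly convex along every geodesic. I would need to verify that the convexity analysis of \autoref{sec:renormalizedenergy} extends to boundary points $y \in \partial\Bn$, which is why the hypothesis refers to the closure of the geodesic rather than the geodesic itself.

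For part (c), I would argue that weak convergence $\mu_k \to \mu$ forces $x_c(\mu_k) \to x_c(\mu)$ via a compactness-plus-convexity argument. First I would show the minimizers $x_c(\mu_k)$ stay in a compact subset of $\Bn$ by a uniform coercivity estimate: the point mass bounds \eqref{eq:pointmass} for all $\mu_k$, together with weak convergence, give a uniform lower bound forcing the blow-up of $\mathcal{E}_{\mu_k}$ near the sphere, so no subsequence of minimizers can escape to the boundary. Then I would pass to a convergent subsequence $x_c(\mu_{k_j}) \to x^*$, use continuity of the kernel together with weak convergence to identify $x^*$ as a critical point (hence the minimizer) of $\mathcal{E}_\mu$, and invoke uniqueness from part (b) to conclude $x^* = x_c(\mu)$; since every subsequence has a further subsequence converging to the same limit, the full sequence converges. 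The delicate point here is that $K$ is not bounded on $\overline{\Bn}$ in the $y$-variable near the boundary, so passing weak convergence through $\int K(x,\cdot)\, d\mu_k$ requires controlling the kernel uniformly for $x$ in a compact set, which the continuity and boundary estimates from \autoref{sec:renormalizedenergy} should supply.
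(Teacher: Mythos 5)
The serious gap is in part (b), case (i). You assert that when $g$ is strictly increasing ``the strict inequality survives the integration for every $y$, so I would get strict convexity immediately.'' That is false for the part of $\mu$ living on the boundary sphere: for $y \in \partial \Bn$ the renormalized kernel is the Busemann-type function $\frac{1}{2}\log\bigl(|x+y|^2/(1-|x|^2)\bigr)$, which does not depend on $g$ at all (after normalizing $g(1)=1$) and, by \autoref{pr:renormconvex}, is merely convex --- indeed hyperbolically \emph{linear} --- along every geodesic emanating from the point $-y$. Strict monotonicity of $g$ buys you nothing at the boundary. Concretely, if $\mu = \frac{1}{2}\delta_{y_1} + \frac{1}{2}\delta_{y_2}$ with $y_1, y_2 \in \partial\Bn$, then $\mathcal{E}$ is constant along the geodesic with endpoints $-y_1$ and $-y_2$ and uniqueness fails outright; this is precisely the configuration excluded by \eqref{eq:pointmass}, which your part (b) never invokes. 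Any correct proof of (b)(i) must use it: the paper decomposes $\mathcal{E}=\mathcal{E}_{\text{ball}}+\mathcal{E}_{\text{sphere}}$ and shows that non-strict convexity along a geodesic $\gamma$ forces $\mu(\Bn)=0$ with the spherical part of $\mu$ supported at the two endpoints of $-\gamma$, contradicting \eqref{eq:pointmass}.

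Part (a) also has problems. You place the negative singularity at the wrong point: as $x \to p$ the map $T_x$ pushes all of $\overline{\Bn}$ except the antipode toward $p$, so $K(x,y) \to +\infty$ for every $y \neq -p$ while the kernel tends to $-\infty$ only at $y=-p$; the competition is against $\mu(\{-p\})$, not $\mu(\{p\})$ (the hypothesis covers all boundary points, so the strategy survives, but the slip matters for the estimates). More substantively, the rate-competition you sketch is delicate in ways you do not anticipate: part (a) assumes only $g(1)>0$, so $g$ may be non-monotone or negative inside the ball, the interior kernel $G(|T_x(y)|)-G(|y|)$ is very negative for interior $y$ near $-p$, and $G(r)-s(r)$ need not stay bounded as $r \to 1$ under mere continuity of $g$; moreover mass can accumulate near $-p$ without forming an atom, so a pointwise Fatou argument fails and one needs a shrinking-cap estimate via $\mu(\overline{B_\epsilon(-p)}) \downarrow \mu(\{-p\})$. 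This is essentially the trap the paper identifies in Millson--Zombro's Lemma 4.10. The paper sidesteps all rate analysis by working with the gradient: it shows $\liminf_{x \to z} z \cdot V(x) \geq \mu(\overline{\Bn}\setminus\{-z\}) - \mu(\{-z\}) > 0$ (this is where \eqref{eq:pointmass} enters), deduces $z \cdot V(rz) \geq \delta$ uniformly for $r$ near $1$, and integrates $\nabla\mathcal{E} = V/(1-|x|^2)$ along radial rays to get $\mathcal{E} \geq \delta\, s(r) + O(1) \to \infty$. Your part (c) is a genuinely different and viable route (minimizer compactness, subsequential critical-point identification, uniqueness) compared with the paper's localized argument via uniform convergence of $\mathcal{E}_k$ on compacta plus the energy-gap/convexity trick; but be aware that the point-mass bounds \eqref{eq:pointmass} for the individual $\mu_k$ do \emph{not} yield uniform coercivity --- they can degenerate as $k \to \infty$ --- so the uniform bound must come from the strict inequality for the limit $\mu$ together with the portmanteau estimate $\limsup_k \mu_k(C) \leq \mu(C)$ for closed caps $C$.
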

The proof of the theorem is in \autoref{sec:herschszegoproof}. In $2$ dimensions, when $g$ is strictly increasing and $\mu$ has no point masses on the unit circle, \autoref{th:herschszego} is due to Girouard, Nadirashvili and Polterovich \cite[Lemmas 2.2.3--2.2.5 and 3.1.1]{GNP09}, \cite[Proposition 3.1]{GP10}. Aubry, Bertrand and Colbois \cite[Lemma 4.11]{ABC09} proved existence of the center of mass for densities in the open hyperbolic ball, not necessarily compactly supported. That result is covered by \autoref{th:herschszego}(a). 

The point mass hypothesis \eqref{eq:pointmass} says that each point on the boundary possesses less than half the total mass of the ball. This hypothesis is essentially necessary for $V$ to have a vanishing point, assuming $g$ achieves its maximum at $r=1$ (e.g., if $g$ is increasing), as we now explain. Let $\hat{y} \in \partial \Bn$, so that $z=T_x(\hat{y})$ is a unit vector too. If $V(x)=0$ then 
\[
0 = V(x) \cdot z = \int_{\overline{\Bn} \setminus \{ \hat{y} \}} v(T_x(y)) \cdot z \, d\mu(y) + g(1) \mu(\{ \hat{y} \}) z \cdot z ,
\]
and of course $z \cdot z = 1$. Solving for the mass at $\hat{y}$, and then using that $g$ is largest at $r=1$, we find
\begin{align*}
\mu(\{ \hat{y} \}) 
& = \int_{\overline{\Bn} \setminus \{ \hat{y} \}} \frac{g(|T_x(y)|)}{g(1)} \frac{T_x(y) \cdot (-z)}{|T_x(y)|} \, d\mu(y) \\
& \leq \mu(\overline{\Bn} \setminus \{ \hat{y} \}) 
= \mu(\overline{\Bn}) - \mu(\{ \hat{y} \}) , 
\end{align*}
which means $\mu( \{ \hat{y} \} ) \leq \frac{1}{2} \mu(\overline{\Bn})$. If equality holds in the displayed formula then $T_x(y)=-z$ for $\mu$-almost every $y \in \overline{\Bn} \setminus \{ \hat{y} \}$, which means $\mu$ has half its mass at $\hat{y}$ and the other half at $T_x^{-1}(-z)$. Thus if $V$ has a vanishing point and $g$ is maximal at $r=1$, then hypothesis \eqref{eq:pointmass} must necessarily hold for all $y \in \partial \Bn$, except when $\mu$ consists of equal point masses concentrated at two boundary points. 

Restricting the measure to the boundary sphere in the last theorem yields a particularly clean result of Hersch type: 
\begin{corollary}[Center of mass on the sphere] \label{cor:spheremu} Assume $\mu$ is a Borel measure on the unit sphere $S^{n-1}, n \geq 2$, satisfying $0 < \mu(S^{n-1}) < \infty$. If  
\begin{equation} \label{eq:pointmasssphere}
\mu( \{ y \} ) < \frac{1}{2} \mu(S^{n-1}) , \qquad y \in S^{n-1} ,
\end{equation}
then a unique point $x=x(\mu) \in \Bn$ exists such that 
\begin{equation} \label{eq:centerofmasssphere}
\int_{S^{n-1}} T_x(y) \, d\mu(y) = 0 .
\end{equation}
That is, pushing forward the measure by $T_x$ results in a center of mass at the origin: 
\[
\int_{S^{n-1}} y \, d[(T_x)_*\mu] = 0 .
\]
This point $x(\mu)$ depends continuously on the measure: if $\mu_k \to \mu$ weakly where the $\mu_k$ are Borel measures on $S^{n-1}$ satisfying \eqref{eq:pointmasssphere}  and $0<\mu_k(S^{n-1})<\infty$, then $x(\mu_k) \to x(\mu)$ as $k \to \infty$. 
\end{corollary}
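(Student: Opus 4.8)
The plan is to recognize \autoref{cor:spheremu} as the special case $g(r)=r$ of \autoref{th:herschszego}, specialized to a measure living on the boundary sphere. First I would record that with $g(r)=r$ the radial vector field becomes $v(y)=|y|\,(y/|y|)=y$, the identity map. Since each $T_x$ carries $\partial\Bn$ into itself, every point $y\in S^{n-1}$ has $|T_x(y)|=1$, and therefore $v(T_x(y))=g(1)\,T_x(y)/|T_x(y)|=T_x(y)$. Consequently the vector field $V$ of \autoref{sec:resultsclosedball} reduces, for a measure $\mu$ supported on $S^{n-1}$, to
\[
V(x)=\int_{\overline{\Bn}} v(T_x(y))\,d\mu(y)=\int_{S^{n-1}} T_x(y)\,d\mu(y),
\]
so that the desired center of mass equation \eqref{eq:centerofmasssphere} is exactly $V(x)=0$.

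Next I would verify the hypotheses of \autoref{th:herschszego} for this choice of $g$ and this $\mu$. Regarding $\mu$ as a finite Borel measure on $\overline{\Bn}$ concentrated on $\partial\Bn$, we have $\mu(\overline{\Bn})=\mu(S^{n-1})$, so $0<\mu(\overline{\Bn})<\infty$, and the point mass hypothesis \eqref{eq:pointmass} coincides verbatim with \eqref{eq:pointmasssphere}. The weight $g(r)=r$ is continuous on $[0,1]$ with $g(0)=0$ and $g(1)=1>0$, so part (a) yields a point $x_c\in\Bn$ with $V(x_c)=0$; this $x_c$ is the sought $x(\mu)$. Because $g(r)=r$ is \emph{strictly} increasing, uniqueness condition (i) in part (b) holds automatically, giving uniqueness of $x(\mu)$ with no need to exclude measures supported in the closure of a geodesic. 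This is precisely the payoff of working with a strictly increasing weight.

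It remains to translate the pushforward reformulation and the continuous dependence. The identity $\int_{S^{n-1}} T_x(y)\,d\mu(y)=\int_{S^{n-1}} y\,d[(T_x)_*\mu]$ is the change-of-variables formula for pushforward measures, applied componentwise. For continuous dependence I would observe that weak convergence $\mu_k\to\mu$ on the compact sphere $S^{n-1}$ implies weak convergence as measures on $\overline{\Bn}$: any $f\in C(\overline{\Bn})$ restricts to a continuous function on $S^{n-1}$, whence $\int f\,d\mu_k\to\int f\,d\mu$. Since condition (i) again holds for the fixed $g(r)=r$, \autoref{th:herschszego}(c) applies directly and delivers $x(\mu_k)\to x(\mu)$.

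I expect no serious obstacle here, since the result is a clean specialization of the closed-ball theorem. The only points requiring genuine care are the boundary computation $v(T_x(y))=T_x(y)$, which relies on $T_x$ preserving the sphere, and the compatibility of the weak topologies on $S^{n-1}$ and on $\overline{\Bn}$ that is needed to invoke part (c).
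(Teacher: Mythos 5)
Your proposal is correct and follows the paper's own proof exactly: both specialize \autoref{th:herschszego} to $g(r)=r$, use $|T_x(y)|=1$ on the sphere to get $v(T_x(y))=T_x(y)$, invoke strict monotonicity of $g$ for uniqueness via condition (i), and apply part (c) for continuous dependence. Your added checks (weak convergence on $S^{n-1}$ versus $\overline{\Bn}$, and the pushforward change of variables) are routine details the paper leaves implicit, so no discrepancy arises.
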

\begin{proof}
Apply \autoref{th:herschszego} with $g(r)=r$ and $\mu$ supported on the sphere, and use that $|T_x(y)|=1$ whenever $|y|=1$ and so $v(T_x(y))=T_x(y)$. The corollary follows. 

\emph{Comment.} The ideas behind this proof were discussed in the Introduction, where the energy method for Hersch's center of mass was summarized in terms of the renormalized energy $\mathcal{E}_{\text{sphere}}(x)$. At an energy minimizing point, the criticality condition $\nabla \mathcal{E}_{\text{sphere}}(x)=0$ implies by formulas \eqref{eq:Kdef} and \eqref{eq:spherekernel} later in the paper that the center of mass equation \eqref{eq:centerofmasssphere} holds. 

\end{proof}
The existence and uniqueness parts of \autoref{cor:spheremu} are due to Douady and Earle \cite[{\S\S}2,11]{DE86} and Millson and Zombro \cite[Lemma 4.11]{MZ96}, as discussed in the Introduction. The result was found again by Girouard, Nadirashvili and Polterovich \cite[Proposition 4.1.5]{GNP09}, for measures without point masses. The latter authors rely on Hersch's topological method for existence, and for uniqueness use that for each point $y$ on the sphere, the component of $T_x(y)$ in direction $x$ exceeds the component of $y$ in direction $x$ (except for the points $y= \pm x/|x|$ that are fixed by $T_x$). They also observe by a short argument with vector fields that uniqueness implies continuous dependence. The continuous dependence proofs in the current paper proceed through properties of the energy, rather than of its gradient field. 

More recently, Biliotti and Ghigi \cite[{\S}7.14 and Theorem 7.6]{BG17} obtained the existence statement of \autoref{cor:spheremu} for the $2$-sphere ($n=3$), as a corollary of their center of mass results for K\"{a}hler manifolds. Energy methods underlie their approach. The kernel is not explicitly formulated. Their Sections 7.17--7.21 are useful in connecting the setting to Hersch's eigenvalue problem, and Problem 1.2 and Theorem 1.3 explain their general goals.  

\smallskip
The original result of Hersch \cite[p.\,1645]{H70} relates to integrals of functions rather than measures: 
\begin{corollary}[Hersch orthogonality on the sphere] \label{cor:hersch}
If $f$ is nonnegative and integrable on $S^{n-1}, n \geq 2$, with $\int_{S^{n-1}} f \, dS > 0$, then a unique point $x \in \Bn$ exists such that each component of $T_x$ is orthogonal to $f$, meaning
\[
\int_{S^{n-1}} T_x(y) f(y) \, dS(y) = 0 .
\]
\end{corollary}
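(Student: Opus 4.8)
The plan is to obtain Corollary \ref{cor:hersch} as an immediate specialization of Corollary \ref{cor:spheremu}, by reinterpreting the integrable density $f$ as a weighted surface measure. First I would define the finite Borel measure $\mu$ on $S^{n-1}$ by $d\mu(y) = f(y) \, dS(y)$. The hypotheses on $f$ --- nonnegativity, integrability, and $\int_{S^{n-1}} f \, dS > 0$ --- guarantee that $\mu$ is a nonnegative Borel measure with $0 < \mu(S^{n-1}) = \int_{S^{n-1}} f \, dS < \infty$, which is precisely the standing assumption of Corollary \ref{cor:spheremu}.

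Next I would verify the point mass condition \eqref{eq:pointmasssphere}. The crucial observation is that $\mu$ is absolutely continuous with respect to surface measure, and surface measure assigns measure zero to each single point of the sphere. Hence $\mu(\{ y \}) = \int_{\{ y \}} f \, dS = 0$ for every $y \in S^{n-1}$, so the strict inequality $\mu(\{ y \}) < \frac{1}{2} \mu(S^{n-1})$ holds automatically, the right-hand side being positive. This is the one place where a general measure could require genuine argument, but for a density the condition is satisfied trivially, since densities carry no point masses.

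Finally, Corollary \ref{cor:spheremu} supplies a unique point $x = x(\mu) \in \Bn$ with $\int_{S^{n-1}} T_x(y) \, d\mu(y) = 0$. Substituting $d\mu = f \, dS$ yields the desired orthogonality
\[
\int_{S^{n-1}} T_x(y) f(y) \, dS(y) = 0 ,
\]
and uniqueness of $x$ is inherited directly from the uniqueness asserted in Corollary \ref{cor:spheremu}.

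I expect no serious obstacle here: the entire content is the reduction of densities to measures, and the only item to check is the point mass condition, which is automatic by absolute continuity. If anything merits a word of care, it is simply confirming that the density measure is genuinely non-atomic, but that is immediate. (One could equivalently cite Corollary \ref{cor:weinhyp} or the closed-ball machinery, but routing through Corollary \ref{cor:spheremu} with $g(r)=r$ is the cleanest path, since it already packages existence, uniqueness, and the center-of-mass equation \eqref{eq:centerofmasssphere} for sphere-supported measures.)
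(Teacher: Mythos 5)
Your proposal is correct and is exactly the paper's proof: the paper likewise applies \autoref{cor:spheremu} with $d\mu = f\,dS|_{S^{n-1}}$, noting that the point mass condition \eqref{eq:pointmasssphere} holds trivially because single points have surface measure zero (since $n-1 \geq 1$). Your extra remarks on absolute continuity simply elaborate that same observation.
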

\begin{proof}
Apply \autoref{cor:spheremu} with $d\mu = f \, dS|_{S^{n-1}}$, noting that each point on the sphere $S^{n-1}$ has $\mu$-measure $0$, since $n-1 \geq 1$.  
\end{proof}

The next corollary lives in the plane, where as we mentioned earlier the M\"{o}bius transformation can be written in complex notation as 
\begin{equation} \label{eq:Twzcomplex}
T_w(z) = \frac{z+w}{1+z\overline{w}} , \qquad w \in \D, \quad z \in \overline{\D} .
\end{equation}
Let $\nu(re^{i\theta})=g(r)e^{i\theta}$, which is the complex-valued version of the vector field $v$. Szeg\H{o} \cite{S54} developed the next result with $F$ a conformal map, that is, a biholomorphic map. 
\begin{corollary}[Szeg\H{o} orthogonality on a simply connected domain] \label{cor:szego}
Suppose $f$ is nonnegative and integrable on a simply connected planar domain $\Omega$ with $\int_\Omega f(z) \, |dz|^2 > 0$. If $F : \Omega \to \D$ is continuous and $g(1)>0$ then a point $w \in \D$ exists such that $\nu \circ T_w \circ F$ is orthogonal to $f$, meaning
\[
\int_\Omega \nu(T_w \circ F(z)) f(z) \, |dz|^2 = 0 .
\]
If in addition $g$ is strictly increasing, or else $F$ is a $C^1$-diffeomorphism and $g$ is increasing with $g(r)>0$ for all $0<r<1$, then the point $w$ is unique.
\end{corollary}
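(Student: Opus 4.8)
The plan is to reduce the statement to \autoref{th:herschszego} on the closed disk $\CD \simeq \overline{\B^2}$ by pushing the measure forward under $F$. I set $d\mu(z) = f(z)\,|dz|^2\big|_\Omega$, a finite nonnegative Borel measure with $0<\mu(\Omega)=\int_\Omega f\,|dz|^2<\infty$, and let $\tilde\mu = F_*\mu$ denote its pushforward to $\CD$, so that $\tilde\mu(A)=\mu(F^{-1}(A))$ for Borel $A\subseteq\CD$. This $\tilde\mu$ is a finite Borel measure with $\tilde\mu(\CD)=\mu(\Omega)\in(0,\infty)$. Writing $\nu$ for the complex form of the two-dimensional vector field $v$ and using $T_w$ from \eqref{eq:Twzcomplex}, the change-of-variables identity for pushforwards gives
\[
\int_{\CD}\nu(T_w(\zeta))\,d\tilde\mu(\zeta)=\int_\Omega \nu\big(T_w(F(z))\big)\,f(z)\,|dz|^2
\]
for every $w\in\D$. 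Thus the integral to be annihilated is precisely the vector field $V(w)$ of \autoref{th:herschszego} applied to $\tilde\mu$, and the corollary follows once the hypotheses of that theorem are verified for $\tilde\mu$.

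The crucial observation is that $F$ maps into the \emph{open} disk, $F(\Omega)\subseteq\D$, so that $F^{-1}(\zeta)=\varnothing$ for every $\zeta\in\partial\D$. Consequently $\tilde\mu(\{\zeta\})=\mu(F^{-1}(\zeta))=0$ for all $\zeta\in\partial\D$, and the point-mass hypothesis \eqref{eq:pointmass} holds trivially. Existence of a zero $w$ of $V$ then follows from \autoref{th:herschszego}(a), using $g(1)>0$, and this $w$ is the desired orthogonality point. (Interior atoms of $\tilde\mu$ are harmless, since \eqref{eq:pointmass} constrains only boundary points.)

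For uniqueness when $g$ is strictly increasing, I would invoke \autoref{th:herschszego}(b)(i) directly, as the zero of $V$ for $\tilde\mu$ is then unique and no further information about $\tilde\mu$ is required; here $F$ need only be continuous. In the remaining case, where $g$ is merely increasing and positive on $(0,1)$ but $F$ is a $C^1$-diffeomorphism, I would check the hypothesis of \autoref{th:herschszego}(b)(ii) that $\tilde\mu$ is not supported in the euclidean closure $\overline\gamma$ of any hyperbolic geodesic $\gamma\subseteq\D$. Since $F(\Omega)\subseteq\D$, one has $F^{-1}(\overline\gamma)=F^{-1}(\gamma)$; and because $F^{-1}$ is $C^1$, the image of the one-dimensional curve $\gamma$ under $F^{-1}$ is a Lebesgue-null subset of $\Omega$. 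Hence $\tilde\mu(\overline\gamma)=\int_{F^{-1}(\gamma)}f\,|dz|^2=0<\tilde\mu(\CD)$, so $\tilde\mu$ charges the complement of every geodesic closure, and \autoref{th:herschszego}(b)(ii) gives uniqueness of $w$.

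The routine ingredients are the pushforward change-of-variables formula and the identification of $\nu$ with the two-dimensional $v$. The only point requiring genuine care --- and the reason the non-strict case demands a diffeomorphism rather than a mere continuous $F$ --- is the verification that $\tilde\mu$ is not concentrated on a geodesic closure: a merely continuous $F$ could collapse a set of positive area onto a geodesic, destroying this property, whereas a $C^1$-diffeomorphism sends the null curve $\gamma$ back to a null set, which is exactly what is needed. I expect this to be the main (albeit mild) obstacle.
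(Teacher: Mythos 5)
Your proposal is correct and follows essentially the same route as the paper: push $f(z)\,|dz|^2$ forward under $F$ to a measure on $\overline{\D}$ that vanishes on $\partial\D$ (so \eqref{eq:pointmass} is automatic), then apply \autoref{th:herschszego}(a) for existence and (b)(i) or (b)(ii) for uniqueness, using in the non-strict case exactly the paper's observation that a $C^1$-diffeomorphism pulls a hyperbolic geodesic back to a Lebesgue-null set, so the pushforward measure is not supported in any geodesic closure. Your added remarks---that $F(\Omega)\subseteq\D$ makes $F^{-1}(\overline\gamma)=F^{-1}(\gamma)$, and that mere continuity of $F$ could collapse positive area onto a geodesic---are sound elaborations of points the paper leaves implicit.
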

\begin{proof}
Apply \autoref{th:herschszego} parts (a) and (b) with $n=2$ and with $d\mu$ being the pushforward of $f(z) \, |dz|^2$ under $F$, that is, with $\mu(A) = \int_{F^{-1}(A)} f(z) \, |dz|^2$ for Borel sets $A \subset \D$, and $\mu=0$ on the unit circle $\partial \D$. For the uniqueness statement, when applying part (b)(ii) notice that if $F$ is a $C^1$-diffeomorphism then the inverse image under $F$ of a hyperbolic geodesic in the disk has Lebesgue measure zero in $\Omega$, and so the geodesic has $\mu$-measure zero; hence $\mu$ is not supported in the geodesic. 
\end{proof}
\begin{corollary}[Weinstock orthogonality on a Jordan curve] \label{cor:weinstock}
Suppose $f$ is nonnegative and integrable on a rectifiable Jordan curve $J$ with $\int_J f \, |dz| > 0$. If $F : J \to \partial \D$ is a homeomorphism then a unique point $w \in \D$ exists such that $T_w \circ F$ is orthogonal to $f$:
\[
\int_J T_w(F(z)) f(z) \, |dz| = 0 .
\]
\end{corollary}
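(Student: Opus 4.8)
The plan is to reduce this to the center of mass result on the circle, namely \autoref{cor:spheremu} with $n=2$, by transporting the weighted arclength measure from $J$ onto $\partial\D$ through the homeomorphism $F$. First I would form the pushforward measure $\mu = F_*(f\,|dz|)$, defined on Borel sets $A \subseteq \partial\D$ by $\mu(A) = \int_{F^{-1}(A)} f(z)\,|dz|$, where $|dz|$ denotes arclength measure on the rectifiable curve $J$. Since $J$ is rectifiable its total arclength is finite and $f$ is integrable, so $\mu$ is a finite Borel measure with total mass $\mu(\partial\D) = \int_J f\,|dz| > 0$; thus $0 < \mu(S^1) < \infty$. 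The pushforward identity $\int_{\partial\D}\phi\,d\mu = \int_J (\phi\circ F)\,f\,|dz|$, applied to each coordinate of $T_w$, converts the desired orthogonality relation into the equivalent statement $\int_{\partial\D} T_w(\zeta)\,d\mu(\zeta)=0$.

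The key point is verifying the point mass hypothesis \eqref{eq:pointmasssphere}, and here the homeomorphism assumption on $F$ does all the work: for each $y \in \partial\D$ the fiber $F^{-1}(\{y\})$ is a single point of $J$, and a single point carries zero arclength, so $\mu(\{y\}) = 0 < \tfrac12\mu(S^1)$ for every $y$. The point mass condition therefore holds automatically, and there is no need to invoke any geodesic hypothesis, since \autoref{cor:spheremu} already delivers uniqueness from the point mass condition alone. This is in contrast to the Szeg\H{o} case, where the support of the pushed-forward measure could in principle meet the boundary circle and atoms must be ruled out by hand.

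With these hypotheses in hand, \autoref{cor:spheremu} produces a unique $w \in \D$ solving $\int_{\partial\D} T_w(\zeta)\,d\mu(\zeta) = 0$, and translating back through the pushforward identity gives exactly $\int_J T_w(F(z)) f(z)\,|dz| = 0$. Uniqueness transfers directly because the two equations are literally identical after the change of variables. The only mild technical point is the measurability and change-of-variables bookkeeping for the pushforward along $F$, which is routine since $F$ is a homeomorphism and $f\,|dz|$ is a finite Borel measure; no genuine analytic difficulty arises, which makes this corollary the most immediate consequence of the sphere result.
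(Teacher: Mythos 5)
Your proposal is correct and follows essentially the same route as the paper: push forward $f\,|dz|$ under $F$ to a finite positive Borel measure on $\partial\D$, note the point mass condition \eqref{eq:pointmasssphere} holds since each fiber $F^{-1}(\{y\})$ is a single point of $J$ and hence carries zero $f\,|dz|$-measure, and apply \autoref{cor:spheremu} with $n=2$. The paper's proof is exactly this reduction, stated more tersely.
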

\begin{proof}
Apply \autoref{cor:spheremu} with $n=2$ and $d\mu$ being the pushforward of $f \, |dz|$ under $F$, that is, with $\mu(A) = \int_{F^{-1}(A)} f \, |dz|$ for Borel sets $A \subset \partial \D$. Note each point on the circle has $\mu$-measure $0$, since its inverse image under $F$ is a single point on the Jordan curve. 
\end{proof}

\section{\bf Existence results for signed measures on the closed ball}  \label{sec:resultssignedmeasure}

The existence claims in \autoref{th:herschszego} and \autoref{cor:spheremu}--\autoref{cor:weinstock} hold even when $\mu$ is a \textbf{signed} measure, as the next theorem shows. Assume $g(r)$ is continuous and real valued for $0 \leq r \leq 1$, with $g(0)=0$. 
\begin{theorem}[Existence of center of mass for a signed measure] \label{th:herschszegosigned}\  

(a) Suppose $\mu$ is a signed Borel measure on $\overline{\Bn}$ satisfying $0 < \mu(\overline{\Bn}) \leq |\mu|(\overline{\Bn}) < \infty$. If $g(1)>0$ and $\mu( \{ y \} ) < \frac{1}{2} \mu(\overline{\Bn})$ for all $y \in \partial \Bn$, then $V(x)=0$ for some $x \in \Bn$. 

(b) Suppose $\mu$ is a signed Borel measure on $S^{n-1}, n \geq 2$, satisfying $0 < \mu(S^{n-1}) \leq |\mu|(S^{n-1}) < \infty$. If $\mu( \{ y \} ) < \frac{1}{2} \mu(S^{n-1})$ for all $y \in S^{n-1}$, then $\int_{S^{n-1}} T_x(y) \, d\mu(y) = 0$ for some $x \in \Bn$. Equivalently, pushing forward the signed measure by $T_x$ yields a measure whose center of mass lies at the origin: $\int_{S^{n-1}} y \, d[(T_x)_*\mu] = 0$. 

(c) If $f : S^{n-1} \to \R$ is integrable and $\int_{S^{n-1}} f \, dS \neq 0, n \geq 2$, then a point $x \in \Bn$ exists such that each component of $T_x$ is orthogonal to $f$, meaning $\int_{S^{n-1}} T_x(y) f(y) \, dS(y) = 0$. 

(d) If $f$ is real-valued and integrable on a simply connected planar domain $\Omega$ with $\int_\Omega f(z) \, |dz|^2 \neq 0$, and $F : \Omega \to \D$ is continuous and $g(1)>0$, then a point $w \in \D$ exists such that $\nu \circ T_w \circ F$ is orthogonal to $f$, meaning $\int_\Omega \nu(T_w \circ F(z)) f(z) \, |dz|^2 = 0$. Here $\nu(re^{i\theta})=g(r)e^{i\theta}$. 

(e) If $f$ is real-valued and integrable on a rectifiable Jordan curve $J$ with $\int_J f \, |dz| \neq 0$, and $F : J \to \partial \D$ is a homeomorphism, then a point $w \in \D$ exists such that $T_w \circ F$ is orthogonal to $f$, meaning $\int_J T_w(F(z)) f(z) \, |dz| = 0$. 
\end{theorem}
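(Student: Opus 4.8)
\emph{Strategy and reductions.} The plan is to prove part (a) by a topological (degree-theoretic) argument and then obtain (b)--(e) from it. The energy method behind \autoref{th:herschszego} rests on strict convexity and the existence of a minimizing point, neither of which is available for signed $\mu$, so instead I would revive the Brouwer--Hersch idea sketched in the Introduction: show that the field $V$ points strictly outward near $\partial\Bn$, and deduce a zero inside. Parts (b)--(e) specialize (a) exactly as Corollaries~\ref{cor:spheremu}--\ref{cor:weinstock} specialized \autoref{th:herschszego}; in (c), (d), (e) one first replaces $f$ by $-f$ if necessary so that the relevant integral is positive, which is harmless since each orthogonality conclusion is insensitive to an overall sign. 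Parts (b), (c), (e) use $g(r)=r$ together with $|T_x(y)|=1$ on the sphere (so $v(T_x(y))=T_x(y)$), while (d) keeps the given $g$. In (b) the point-mass hypothesis is exactly \eqref{eq:pointmass} and passes to (a) unchanged; in (c), (d), (e) the pushforward measure has no boundary atoms, since surface measure and arclength are nonatomic and the pushforward in (d) is supported in the open disk, so \eqref{eq:pointmass} holds automatically. Either way the theorem reduces to part (a).

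\emph{Degree argument for (a).} The heart of the matter is the uniform outward-pointing estimate: there exist $r_0<1$ and $c>0$ with
\[
V(x)\cdot \frac{x}{|x|} \geq c \qquad\text{whenever } r_0 \leq |x| < 1 .
\]
Granting this, fix $r\in(r_0,1)$ and work on $\overline{B_r}$. The straight-line homotopy $H(x,s)=(1-s)V(x)+sx$ satisfies $H(x,s)\cdot x=(1-s)\,V(x)\cdot x+s|x|^2>0$ on the sphere $\{|x|=r\}$, so $V$ is nonvanishing there and homotopic in $\Rn\setminus\{0\}$ to the outward field $x\mapsto x$, which has degree $1$. A continuous field with this boundary behaviour on $\overline{B_r}$ must vanish in the interior, giving $V(x_c)=0$ for some $x_c\in B_r\subset\Bn$. (For $n=1$ this is just the intermediate value theorem, $V$ being positive near $r$ and negative near $-r$.)

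\emph{Proof of the estimate.} I would argue by contradiction and compactness. If the estimate failed there would be $x_k$ with $|x_k|\to1$ and $V(x_k)\cdot x_k/|x_k|\leq0$; after passing to a subsequence, $x_k\to\hat{x}\in\partial\Bn$. Split the integral for $V(x_k)\cdot(x_k/|x_k|)$ over $\{|y+\hat{x}|\geq\delta\}$ and its complement. On the far region, $T_{x_k}(y)\to\hat{x}$ uniformly by \eqref{eq:Mobius} (there $|\hat{x}+y|$ is bounded below), so that part tends to $g(1)\,\mu(\{|y+\hat{x}|\geq\delta\})\to g(1)\big(\mu(\overline{\Bn})-\mu(\{-\hat{x}\})\big)$ as $\delta\to0$. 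The near region $\{|y+\hat{x}|<\delta\}$ is controlled by two facts: its boundary part is mapped by $T_{x_k}$ back into $\partial\Bn$, where $|T_{x_k}(y)|=1$ forces $|v(T_{x_k}(y))|=g(1)$, so it contributes at least $-g(1)\,|\mu|(\{|y+\hat{x}|<\delta\}\cap\partial\Bn)\to-g(1)\,|\mu(\{-\hat{x}\})|$; and its interior part shrinks to the boundary point $-\hat{x}$, where $|\mu|$ carries no interior mass, so its $|\mu|$-measure tends to $0$ and its bounded contribution is negligible. Hence the liminf of $V(x_k)\cdot(x_k/|x_k|)$ is at least
\[
g(1)\big(\mu(\overline{\Bn})-\mu(\{-\hat{x}\})\big)-g(1)\,|\mu(\{-\hat{x}\})| ,
\]
which equals $g(1)\big(\mu(\overline{\Bn})-2\mu(\{-\hat{x}\})\big)>0$ when $\mu(\{-\hat{x}\})\geq0$ (by \eqref{eq:pointmass}) and equals $g(1)\,\mu(\overline{\Bn})>0$ when $\mu(\{-\hat{x}\})<0$. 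This contradicts $V(x_k)\cdot x_k/|x_k|\leq0$.

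\emph{Main obstacle.} The decisive point, and the one I would watch most carefully, is the treatment of the near region for a \emph{sign-changing} weight $g$. The naive bound $|v|\leq\max_{[0,1]}|g|$ there is too lossy and would demand an atom bound far stronger than \eqref{eq:pointmass}. What rescues the argument is that a boundary atom near the antipode $-\hat{x}$ is carried by $T_{x_k}$ to \emph{some} boundary point --- direction unknown, but magnitude pinned to exactly $g(1)$ --- so its inward radial pull cannot exceed $g(1)\,|\mu(\{-\hat{x}\})|$, precisely the quantity the point-mass condition controls. This is the signed-measure incarnation of the necessity computation following \autoref{th:herschszego}.
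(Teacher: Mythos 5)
Your proposal is correct, and it differs from the paper's proof only in the finishing mechanism, not in the decisive estimate. The heart of both arguments is the same boundary bound: you obtain $\liminf V(x_k)\cdot\hat{x}_k \geq g(1)\big(\mu(\overline{\Bn})-\mu(\{-\hat{x}\})-|\mu(\{-\hat{x}\})|\big) = g(1)\big(\mu(\overline{\Bn})-2\mu(\{-\hat{x}\})^+\big)>0$, which is exactly the paper's display $\liminf_{x\to z} z\cdot V(x) \geq \mu(\overline{\Bn}\setminus\{-z\})-|\mu(\{-z\})| = 2\e(z)$ with $\e(z)=\frac{1}{2}\mu(\overline{\Bn})-\mu(\{-z\})^+$ (the paper normalizes $g(1)=1$), including your case split on the sign of the atom. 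Your key observation --- that the atom at $-\hat{x}$ is carried to a boundary point where $|v|$ is pinned to $g(1)$, so its inward pull is controlled by precisely the quantity in \eqref{eq:pointmass} --- is the same mechanism the paper uses; the paper reaches it slightly more economically by dominated convergence over $\overline{\Bn}\setminus\{-z\}$ plus the single atom term $v(T_x(-z))\,\mu(\{-z\})$, so the lossy bound $\max_{[0,1]}|g|$ enters only as a dominating function and never in the limit, whereas your far/near splitting does the same bookkeeping by hand (and your interior-near-region step, using $|\mu|(\Bn\cap B(-\hat{x},\delta))\to 0$, is sound). Where you genuinely diverge: the paper completes part (a) by running the proof of \autoref{th:herschszego}(a) verbatim, integrating $\nabla\mathcal{E}=V(x)/(1-|x|^2)$ along rays to show the renormalized energy tends to infinity and taking a minimum point, while you finish with the homotopy $(1-s)V+sx$ and a degree argument on $\{|x|=r\}$. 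That option is explicitly anticipated in the paper (``we could invoke Brouwer's fixed point theorem''), and for signed measures it is arguably the more natural route: no uniqueness is claimed, so nothing is lost by abandoning the energy, and you need only continuity of $V$, bypassing the renormalized kernel machinery of \autoref{sec:renormalizedenergy} entirely. Your reductions of (b)--(e) to (a) coincide with the paper's, including the sign flip $f\mapsto -f$ and the verification that the pushforward measures carry no boundary atoms. One cosmetic repair: negating your uniform estimate yields a sequence with $V(x_k)\cdot\hat{x}_k < 1/k$ rather than $\leq 0$; since your liminf bound is strictly positive, the contradiction goes through unchanged.
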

See \autoref{sec:herschszegoproofsigned} for the proof. In part (a), the hypothesis that the measure of the point $y$ on the sphere is less than half the total measure of the ball is satisfied automatically at all points where $\mu(\{ y \}) \leq 0$, since $\mu(\overline{\Bn})$ is assumed to be positive. Parts (c),(d) and (e) of the theorem provide signed versions of the existence claims in the Hersch, Szeg\H{o}, and Weinstock corollaries, respectively. 

\begin{example*}[Failure of uniqueness for signed measures on the ball]
\autoref{th:herschszegosigned} makes no claims about uniqueness. In fact, uniqueness can fail in part (a) of the theorem, as we show by example in the open unit interval. (The same phenomenon occurs in all dimensions.) Put $a=\tanh 1$ and $b=\tanh 2$, and let 
\[
\mu=-\delta_{-a}+3\delta_0-\delta_a ,
\] 
so that $\mu$ consists of negative point masses at $\pm a$ and a triple point mass at the origin. Observe $\mu(\B)=1>0$, and $\mu$ has no point masses at the boundary points $\pm 1$. Define a continuous, increasing function  
\[
g(r) = 
\begin{cases}
s(r) , & 0 \leq r \leq a, \\
2s(r)-1 & a \leq r \leq b , \\
3 & b \leq r \leq 1 .
\end{cases}
\]
One has $s(a)=1,s(b)=2$, and so $g(a)=1,g(b)=3$. Hence $v(-a)=-1,v(0)=0,v(a)=1,v(b)=3$. Also $T_a(-a)=0, T_a(0)=a, T_a(a)=\tanh(1+1)=b$. Hence the vector field evaluates at $x=0$ and $x=a$ to 
\[
V(0)=(-1)(-1)+0 \cdot 3+1(-1)=0 , \qquad V(a)=0(-1)+1 \cdot 3 +3(-1)=0 .
\]
Thus $V$ vanishes at more than one point. The same reasoning shows that $V(x)=0$ for all $x \in [-a,a]$, and so uniqueness fails badly. 
\end{example*}

\begin{example*}[Failure of uniqueness for signed measures on the sphere]
Uniqueness can fail in \autoref{th:herschszegosigned}(b), that is, for measures on the sphere, by the following example on the unit circle ($n=2$). Using complex notation for $T_w(z)$ as in \eqref{eq:Twzcomplex}, the task is to find a signed Borel measure $\mu$ on $\partial \D$ satisfying $0 < \mu(\partial \D) \leq |\mu|(\partial \D) < \infty$ and $\mu( \{ z \} ) < \frac{1}{2} \mu(\partial \D)$ for all $z \in \partial \D$, such that $\int_{\partial \D} T_w(z) \, d\mu(z) = 0$ for more than one point $w \in \D$. Let 
\[
\mu = \delta_{e^{2\pi i/3}} + \delta_{e^{-2\pi i/3}} - \delta_{e^{\pi i/3}} - \delta_{e^{-\pi i/3}} + \delta_1 
\]
be a sum of positive point masses at $1,e^{2\pi i/3},e^{-2\pi i/3}$, and negative point masses at $e^{\pi i/3},e^{-\pi i/3}$. Clearly $\mu(\partial \D)=1$ and $|\mu|(\partial \D)=5$. The condition $\mu( \{ z \} ) < \frac{1}{2} \mu(\partial \D)$ fails for each point $z$ hosting a positive point mass, but we will fix that issue later by smearing the point masses into continuous densities. 

The vector field is 
\begin{align*}
V(w) 
& = \int_{\partial \D} T_w(z) \, d\mu(z) \\
& = T_w(e^{2\pi i/3}) + T_w(e^{-2\pi i/3}) - T_w(e^{\pi i/3}) - T_w(e^{-\pi i/3}) + T_w(1) 
\end{align*}
for $w \in \D$. Restricting attention to real values $-1<w<1$, we have $T_w(z) = \frac{z+w}{1+zw}$ by \eqref{eq:Twzcomplex}, so that $T_w(\pm 1)=\pm 1$ and $T_w(\overline{z})=\overline{T_w(z)}$. Hence $V(w)$ is real-valued, when $-1<w<1$. 

We will evaluate $V$ at $w=0$, and as $w \to \pm 1$. First 
\[
V(0) = e^{2\pi i/3}+e^{-2\pi i/3} -e^{\pi i/3}-e^{-\pi i/3} + 1 = -1 .
\]
As $w \to 1$ one has $T_w(z) \to 1$ for all $z \in \partial \D \setminus \{ -1 \}$, and so $V(w) \to 1+1-1-1+1=1$. And as $w \to -1$ one has $T_w(z) \to -1$ for all $z \in \partial \D \setminus \{ 1 \}$, and $T_w(1)=1$, so that $V(w) \to -1-1+1+1+1=1$. Thus $V(w)$ changes sign from positive to negative to positive, as $w$ increases from $-1$ to $1$, and so $V$ has at least two zeros. 

Finally, if the point masses are smeared into densities that are symmetric about the real axis and are sufficiently concentrated, then the vector field $V$ maintains the sign-changing property and hence has at least two zeros. This modified measure $\mu$ has no point masses, and so the condition $\mu( \{ z \} ) < \frac{1}{2} \mu(\partial \D)$ is satisfied  for all $z$. 
\end{example*}

\begin{example*}[Failure of existence for signed measures with $\mu(\overline{\Bn})=0$]
The net measure $\mu(\overline{\Bn})$ is assumed nonzero in \autoref{th:herschszegosigned}(a). When it equals zero, the vector field $V$ might not have any vanishing points. For example, in $1$ dimension, if $\mu=\delta_1-\delta_{-1}$ and $g(r)=r$ then $\mu(\overline{\B})=0$ and 
\[
V(x) = T_x(1) - T_x(-1) = \frac{x+1}{1+x} - \frac{x+(-1)}{1+x(-1)} = 2 
\]
for all $x \in (-1,1)$, and so $V$ does not vanish anywhere. This continues to hold if the point masses are smeared into symmetrical, concentrated densities near $\pm 1$.
\end{example*}

\section{\bf Proof of \autoref{th:hyperbolic} --- center of mass on $\Bn$}  \label{sec:hyperbolicproof}	

\subsection*{Part (a) --- Existence} 
Let 
\[
G(r)=\int_0^r g(t) (1-t^2)^{-1} \, dt , \qquad 0 \leq r < 1 ,
\]
so that $G(0)=0$ and $G(r) \to \infty$ as $r \to 1$, by assumption. Put
\[
\Gamma(x) = G(|x|) ,  \qquad x \in \Bn .
\]
This function has gradient
\[
\nabla \Gamma(x) = g(|x|) (1-|x|^2)^{-1} \frac{x}{|x|} =  \frac{1}{1-|x|^2} v(x) .
\]
More generally, for each fixed $y \in \Bn$ one computes
\begin{equation} \label{eq:gradientgamma}
\nabla_{\! x} \big(\Gamma \circ T_x(y) \big) 
= g(|T_x(y)|) \frac{1}{1-|T_x(y)|^2} \nabla_{\! x} (|T_x(y)|) .
\end{equation}
To evaluate the gradient of $|T_x(y)|$, start with the formula 
\begin{equation} \label{eq:TxyTyx}
|T_x(y)|^2= \frac{|x+y|^2}{1+ 2x \cdot y + |x|^2|y|^2} = |T_y(x)|^2 , \qquad x,y \in \Bn ,
\end{equation}
which holds by direct computation from definition \eqref{eq:Mobius}, Take the gradient to obtain 
\[
\nabla_{\! x} (|T_x(y)|^2) = 2 \frac{1-|T_x(y)|^2}{1-|x|^2} T_x(y) .
\]
Dividing by $2|T_x(y)|$ yields $\nabla_{\! x} (|T_x(y)|)$, and then substituting into \eqref{eq:gradientgamma} gives that  
\begin{equation} \label{eq:Gammagrad}
\nabla_{\! x} \big(\Gamma \circ T_x(y) \big)  
= \frac{1}{1-|x|^2} g(|T_x(y)|) \frac{T_x(y)}{|T_x(y)|} = \frac{1}{1-|x|^2} v(T_x(y)) .
\end{equation}
%

Now define an energy functional
\begin{align*}
E(x) 
& = \int_\Bn (\Gamma \circ T_x) \, d\mu \\
& = \int_\Bn G(|T_x(y)|) \, d\mu(y) , \qquad x \in \Bn .
\end{align*}
This energy $E(x)$ is finite valued and continuously differentiable, since $\mu$ has compact support with finite total measure and the kernel $\Gamma \circ T_x$ is continuously differentiable with respect to $x$, by above. 

Clearly  $E(x) \to \infty$ as $|x| \to 1$, because $|T_x(y)| \to 1$ and $G(r) \to \infty$ as $r \to 1$, and also $\mu(\Bn)>0$. Hence $E(x)$ achieves a minimum at some point $x_c \in \Bn$. 

Differentiating the energy gives that
\begin{align}
\nabla E(x) 
& =  \int_\Bn \nabla_{\! x} \big(\Gamma \circ T_x(y) \big) \, d\mu(y) \notag \\
& = \frac{1}{1-|x|^2} \int_\Bn v(T_x(y)) \, d\mu(y) 
= \frac{1}{1-|x|^2} V(x) \label{eq:hyperbolicgradient}
\end{align}
by above. 
Thus critical points of the energy are zeros of $V$. In particular, $V$ vanishes at the energy minimizing point $x_c$. 

\smallskip
\subsection*{Part (b) --- Uniqueness by convexity: the geometric method} 
Conditions (i) and (ii) each imply that $g(r)$ is positive and bounded away from zero, as $r \to 1$, so that $\int_0^1 g(r) (1-r^2)^{-1} \, dr = \infty$. Thus part (a) guarantees existence of a point $x_c$ at which $V$ vanishes. 

We will prove below that:
\begin{align} 
\text{if condition (i) holds then the kernel $\Gamma$ is strictly hyperbolically convex;} \label{eq:convexitycondi} \\
\text{if condition (ii) holds then $\Gamma$ is hyperbolically convex along each geodesic,} \notag \\
\text{and the convexity is strict if the geodesic does not pass through the origin.} \label{eq:convexitycondii}
\end{align}
For now, assume these conditions hold. Notice also that $|T_x(y)|=|T_y(x)|$ by \eqref{eq:TxyTyx}, and so we may interchange $x$ and $y$ in the energy integral to get $E(x) = \int_\Bn \Gamma\big(T_y(x)\big) \, d\mu(y)$. Recall the M\"{o}bius transformation $T_y(\cdot)$ is a hyperbolic isometry, and thus preserves convexity along geodesics. 

If condition (i) holds then $x \mapsto \Gamma\big(T_y(x)\big)$ is strictly hyperbolically convex by \eqref{eq:convexitycondi}, for each $y \in \Bn$. Integrating with respect to $d\mu(y)$ gives that $E(x)$ is strictly hyperbolically convex, and hence its critical point $x_c$ is unique. 

If condition (ii) holds, then the same argument gives hyperbolic convexity of $E(x)$ along each geodesic $\gamma$, and the convexity is strict unless the geodesic $T_y(\gamma)$ passes through $0$ for $\mu$-almost every $y \in \Bn$. That exceptional case would imply $\gamma$ contains the point $T_y^{-1}(0)=-y$ for $\mu$-almost every $y$, and so $\mu$ would be supported in the geodesic $-\gamma$. Such a circumstance is forbidden in condition (ii), and so $E(x)$ is strictly hyperbolically convex along each geodesic, implying uniqueness of the critical point $x_c$.

\smallskip
It remains to prove implications \eqref{eq:convexitycondi} and \eqref{eq:convexitycondii}. First we show $\Gamma$ is hyperbolically convex if $g$ is increasing and $g(r)>0$ for all $r>0$, which holds true under either condition (i) or condition (ii). Recall that $s = s(r) = \frac{1}{2} \log \frac{1+r}{1-r}$ measures hyperbolic distance from the origin to a point at euclidean radius $r$, with $ds/dr = (1-r^2)^{-1}$. Define a new function $\widetilde{G}(s)$ for $0 \leq s < \infty$ by
\[
G(r) = \widetilde{G}(s(r)) , \qquad 0 \leq r < 1 .
\]
Then 
\[
\Gamma(x) = G(|x|) = \widetilde{G} \big( d_\Bn(x,0) \big) . 
\]
And since $\widetilde{G}^\prime(s) = (1-r^2)G^\prime(r)=g(r)>0$ and $g$ is increasing, we see $\widetilde{G}(s)$ is convex and strictly increasing. 

To show $\Gamma$ is hyperbolically convex, consider points $x_0,x_1 \in \Bn$ with $x_0 \neq x_1$, and write $\gamma$ for the geodesic joining the two points. Let $0<\e<1$ and write $x_\e$ for the point along $\gamma$ whose distance from $x_0$ is $\e d_\Bn(x_0,x_1)$ and distance from $x_1$ is $(1-\e)d_\Bn(x_0,x_1)$. Observe 
\begin{align}
\Gamma(x_\e) 
& = \widetilde{G}\big( d_\Bn(x_\e,0)\big) \notag \\
& \leq \widetilde{G}\big( (1-\e) d_\Bn(x_0,0) + \e d_\Bn(x_1,0) \big) \qquad \text{since $\widetilde{G}$ is increasing} \label{eq:hypconvexGamma} \\
& \hspace*{1cm} \text{and $d_\Bn(\cdot,0)$ is hyperbolically convex by \autoref{le:hypgeo},} \notag \\
& \leq (1-\e)\widetilde{G}\big(d_\Bn(x_0,0)\big) + \e \widetilde{G}\big(d_\Bn(x_1,0)\big) \qquad \text{by convexity of $\widetilde{G}$} \label{eq:hypconvexGamma2} \\
& = (1-\e)\Gamma(x_0) + \e \Gamma(x_1) . \notag
\end{align}
Hence $\Gamma$ is hyperbolically convex. 

We must strengthen the conclusion to strict hyperbolic convexity. Note first that since $\widetilde{G}$ is strictly increasing, equality in \eqref{eq:hypconvexGamma} would imply that the hyperbolic convexity of $d_\Bn(\cdot,0)$ is nonstrict along $\gamma$, which by \autoref{le:hypgeo} would imply that $x_0$ and $x_1$ point in the same direction (that is, lie on the same ray from the origin).

If condition (i) holds then $\widetilde{G}^\prime(s)=g(r)$ is strictly increasing and hence $\widetilde{G}$ is strictly convex. If equality holds in \eqref{eq:hypconvexGamma2} then this strict convexity implies $|x_0|=|x_1|$. Since $x_0 \neq x_1$ by assumption, the vectors must point in different directions, and so inequality \eqref{eq:hypconvexGamma} is strict. Hence $\Gamma$ is strictly hyperbolically convex, proving implication \eqref{eq:convexitycondi}. 

\smallskip
Suppose condition (ii) holds. To prove implication \eqref{eq:convexitycondii} we must show that if the hyperbolic convexity of $\Gamma$ along some geodesic is not strict, then that geodesic passes through the origin. For this, simply observe that if equality holds in \eqref{eq:hypconvexGamma} for some $x_0 \neq x_1$ then (by \autoref{le:hypgeo}) the points $x_0$ and $x_1$ must lie on some ray from the origin, and so the geodesic that passes through the points must also pass through the origin. 

\smallskip
\subsection*{Part (b) --- Uniqueness by convexity: the analytic method} 
As in the geometric method above, the task reduces to proving the hyperbolic convexity implications \eqref{eq:convexitycondi} and \eqref{eq:convexitycondii} for the kernel $\Gamma$. This time we prove them by analytic techniques. 

Parameterize a hyperbolic geodesic $\gamma$ by $x(t)$, where $t$ is euclidean arclength along the curve. The derivative of $\Gamma$ along the geodesic with respect to hyperbolic arclength is 
\[
(1-|x(t)|^2) \frac{d\ }{dt} \Gamma \big( x(t) \big) = g\big( |x(t)| \big) \frac{d\ }{dt} |x(t)| = v\big(x(t)\big) \cdot x^\prime(t) ,
\]
where we used that $G^\prime(r)=(1-r^2)^{-1} g(r)$. The desired implications \eqref{eq:convexitycondi} and \eqref{eq:convexitycondii} can therefore be rephrased as follows:
\begin{align} 
\text{if condition (i) holds then $v\big(x(t)\big) \cdot x^\prime(t)$ is strictly increasing;} \label{eq:convexitycondiii} \\
\text{if condition (ii) holds then $v\big(x(t)\big) \cdot x^\prime(t)$ is increasing, and is} \notag \\
\text{strictly increasing if the geodesic does not pass through the origin.} \label{eq:convexitycondiv}
\end{align}

First suppose $g$ is increasing and $g(r)>0$ for all $r>0$, which holds under both conditions (i) and (ii). Suppose further that $\gamma$ does not pass through the origin. In particular, this means the dimension $n$ is greater than $1$. We will show $v\big(x(t)\big) \cdot x^\prime(t)$ is strictly increasing. After a suitable rotation to place the geodesic into the $x_1 x_2$-plane symmetrically about the $x_1$-axis, the geodesic can be taken as the arc within the unit disk of the circle having radius $b$ centered at $(x_1,x_2)=(a,0)$, where $a^2=b^2+1$. A parameterization of this geodesic in terms of euclidean arclength $t$ is 
\[
x(t) = \big( a- b \cos (t/b), b \sin (t/b) \big) , \qquad |t| < b \arccos \frac{b}{a} ,
\]
so that 
\[
|x(t)| = \sqrt{a^2 - 2ab \cos (t/b) + b^2} ,
\]
which equals $1$ when $|t| = b \arccos (b/a)$. The second derivative is
\[
\frac{d^2\ }{dt^2} \, |x(t)| = a^2 \frac{(a/b-\cos (t/b))(\cos (t/b) - b/a)}{(a^2 - 2ab \cos t/b + b^2)^{3/2}} ,
\]
which is positive since $a/b>1$ and because $\cos (t/b) > b/a$ when $|t| < b \arccos (b/a)$. Thus $|x(t)|$ is a strictly convex function of $t$ when $|t| < b \arccos (b/a)$, and has its minimum at $t=0$. In particular, $|x(t)|$ is positive and decreasing when $t \in (-b \arccos(b/a),0)$ and is positive and increasing when $t \in (0,b \arccos(b/a))$. Hence $g\big( |x(t)| \big)$ has the same properties, because $g(r)$ is positive and increasing for $r>0$. Further, the first derivative $(d/dt) |x(t)|$ is negative and strictly increasing when $t \in (-b \arccos (b/a),0)$, and positive and strictly increasing when $t \in (0,b \arccos (b/a))$, by using the strict convexity. Putting these facts together shows that $g\big( |x(t)| \big) \frac{d\ }{dt} |x(t)|=v\big(x(t)\big) \cdot x^\prime(t)$ is a strictly increasing function of $t$. 

To finish proving \eqref{eq:convexitycondiii} and \eqref{eq:convexitycondiv}, we show that if $\gamma$ does pass through the origin, then $g\big( |x(t)| \big) \frac{d\ }{dt} |x(t)|$ is increasing provided $g$ is increasing (condition (ii)) and is strictly increasing if $g$ is strictly increasing (condition (i)). Indeed, after a suitable rotation to the $x_1x_2$-plane, the geodesic can be parameterized as the line segment $x(t)=(0,t)$ for $t \in (-1,1)$, so that $g\big( |x(t)| \big) \frac{d\ }{dt} |x(t)| = \sign(t) g(|t|)$, which is an increasing function of $t$ and is strictly increasing if $g$ is strictly increasing. 

\smallskip
\subsection*{Part (c) --- Continuous dependence} The following proof is a slight adaptation of the euclidean case in \cite[Theorem 1(c)]{L20e}. 

Part (c) assumes that either (i) holds or else (ii) holds for $\mu, \mu_1,\mu_2,\mu_3,\dots$. Hence by parts (a) and (b), we may write $x_c(\mu)$ for the unique minimum point of the energy $E$ corresponding to the measure $\mu$, and $x_c(\mu_k)$ for the unique minimum point of the energy $E_k$ corresponding to the measure $\mu_k$. 

The measures $\mu_k$ and $\mu$ are assumed to be supported in some fixed compact set $Y \subset \Bn$, and the weak convergence $\mu_k \to \mu$ implies that $\mu_k(Y) \to \mu(Y)$. Let $X$ be an arbitrary compact set in $\Bn$. The kernel $\Gamma(T_x(y))$ is uniformly continuous and bounded for $(x,y) \in X \times Y$, and it follows easily that the family $\{ E_k(x) \}_{k=1}^\infty$ is uniformly equicontinuous on $X$. Therefore the weak convergence $\mu_k \to \mu$ as $k \to \infty$ implies that $E_k(x) \to E(x)$ pointwise and also, after a short argument using equicontinuity, uniformly for $x \in X$. 
%
%

Let $\e>0$, and denote by $B$ the open ball of radius $\e$ centered at $x_c(\mu)$, with $\e$ chosen small enough that $\overline{B} \subset \Bn$. The strict energy minimizing property of $x_c(\mu)$ implies 
\[
E(x_c(\mu)) < \min_{x \in \partial B} E(x) ,
\]
and so (by choosing $X=\partial B$) we deduce from uniform convergence that 
\[
E_k(x_c(\mu)) < \min_{x \in \partial B} E_k(x) 
\]
for all large $k$. Consequently, the open ball $B$ contains a local minimum point for the energy $E_k$, when $k$ is large. This local minimum must be the global minimum point $x_c(\mu_k)$, by strict hyperbolic convexity of the energy. Since $\e$ was arbitrary, we conclude $x_c(\mu_k) \to x_c(\mu)$ as $k \to \infty$, giving continuous dependence.

\section{\bf Proof of \autoref{cor:weinfoldhyp} --- Orthogonality with a hyperbolic fold}  \label{sec:weinfoldhypproof}

Existence and uniqueness follow from \autoref{th:hyperbolic} with $\mu$ being the pushforward under $F$ of the measure $f(y) (1-|y|^2)^{-n} dy$ on $\Omega$. This $\mu$ is not supported in any hyperbolic geodesic, and so condition (ii) holds as needed in part (b) of the theorem. 

For continuous dependence, in order to check the hypotheses of \autoref{th:hyperbolic}(c) we show $\mu$ is (weakly sequentially) continuous with respect to the parameters $(p,t)$ of the hyperbolic halfspace $H(p,t)$. So suppose $p_k \to p$ in $S^{n-1}$ and $t_k \to t$ in $(-1,1)$. Let $F_k$ be the fold map coresponding to $H(p_k,t_k)$, and $\mu_k$ be the pushforward under $F_k$ of the measure $f(y) (1-|y|^2)^{-n} dy|_\Omega$. The image of $\Omega$ under $F_k$ is compactly contained in $\Bn$ independently of $k$, which means the measures $\mu_k$ are all supported in some fixed compact set in the ball. To apply part (c) of the theorem, we have only to show $\mu_k \to \mu$ weakly. So take a continuous function $\psi(y)$ on $\Bn$, and note
\begin{align*}
\int_\Bn \psi \, d\mu_k
& = \int_\Omega \psi(F_k(y)) f(y) (1-|y|^2)^{-n} dy \\
& \to \int_\Omega \psi(F(y)) f(y) (1-|y|^2)^{-n} dy = \int_\Bn \psi \, d\mu 
\end{align*}
by using locally uniform convergence of $F_k$ to $F$, or else by dominated convergence.

\section{\bf Renormalizing the energy for the closed ball}  \label{sec:renormalizedenergy}	

The measure $\mu$ in \autoref{th:herschszego} can live on the whole closed ball. If it has positive mass on the boundary then the energy $E(x)$ used in proving \autoref{th:hyperbolic} will equal $+\infty$ at every $x$, because $G(|T_x(y)|)=G(1)=\infty$ whenever $y \in \partial \Bn$. To rescue the energy method from this fate, we investigate in this section the properties of a renormalized kernel and energy. Then the next section proves \autoref{th:herschszego}. 

Consider a continuous function $g(r)$ for $0 \leq r \leq 1$, with $g(0)=0$ and $g(1)>0$. Without loss of generality, we may assume $g(1)=1$. Recall the weighted antiderivative $G(r)=\int_0^r g(t) (1-t^2)^{-1} \, dt$ for $0 \leq r < 1$. Note $G(r) \to \infty$ as $r \to 1$, and remember $\Gamma(x) = G(|x|)$ for $x \in \Bn$. Define a \textbf{renormalized kernel} 
\begin{equation} \label{eq:Kdef}
K(x,y) = 
\begin{cases}
\Gamma(T_x(y)) - \Gamma(y) & \text{for\ } |x|<1, \ |y| < 1 , \\
{\displaystyle \frac{1}{2} \log \frac{|x+y|^2}{1-|x|^2} } & \text{for\ } |x|<1, \ |y| = 1 .
\end{cases}
\end{equation}
That is, the renormalization procedure consists of subtracting $\Gamma(y)$ when $|y|<1$, and extending continuously to the spatial boundary when $|y|=1$, as the next result explains.
\begin{lemma} \label{le:renormkernel}
The kernel $K : \Bn \times \overline{\Bn} \to \R$ is continuous, and its gradient with respect to $x$ is
\begin{equation} \label{eq:Kgrad}
\nabla_{\! x} K(x,y) = \frac{1}{1-|x|^2} (v \circ T_x)(y) .
\end{equation}
\end{lemma}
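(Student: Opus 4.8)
I would handle the two assertions separately, disposing of the gradient formula (essentially a computation) first and then devoting the main effort to continuity across the boundary sphere.

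For the gradient, on the interior region $\{|y|<1\}$ the subtracted term $\Gamma(y)$ is independent of $x$, so \eqref{eq:Gammagrad} gives at once $\nabla_{\!x} K(x,y) = \nabla_{\!x}\Gamma(T_x(y)) = (1-|x|^2)^{-1} v(T_x(y))$. On the boundary $\{|y|=1\}$ I would differentiate the explicit logarithm: writing $K(x,y) = \log|x+y| - \tfrac12\log(1-|x|^2)$ yields $\nabla_{\!x} K(x,y) = \frac{x+y}{|x+y|^2} + \frac{x}{1-|x|^2}$. To match the claimed right-hand side, note that $|T_x(y)|=1$ there, so $g(1)=1$ gives $v(T_x(y)) = T_x(y)$, and \eqref{eq:Mobius} simplifies (using $|y|^2=1$ and $1+2x\cdot y+|x|^2 = |x+y|^2$) to $T_x(y) = [2(1+x\cdot y)x + (1-|x|^2)y]/|x+y|^2$. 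The identification then reduces to the scalar identity $(1-|x|^2) + |x+y|^2 = 2(1+x\cdot y)$, which is immediate.

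For continuity, the interior and boundary pieces are each manifestly continuous on their own domains (on the boundary using that $x\neq -y$ since $|x|<1=|y|$), so the only issue is joint continuity as $y$ approaches the sphere. The plan is to pass to hyperbolic arclength: define $\widetilde G$ by $G(r)=\widetilde G(s(r))$, so that $\widetilde G'(s) = g(\tanh s) \to g(1) = 1$, and write $K(x,y) = \widetilde G(s_1) - \widetilde G(s_0)$ with $s_1 = s(|T_x(y)|)$ and $s_0 = s(|y|)$. Two facts drive the argument. First, the increment $s_1-s_0$ stays bounded: since $T_x$ is an isometry with $T_{-x}(0)=-x$, one has $s_1 = d_\Bn(T_x(y),0) = d_\Bn(y,-x)$, whence $|s_1-s_0| = |d_\Bn(y,-x)-d_\Bn(y,0)| \le d_\Bn(x,0) = s(|x|)$ by the triangle inequality. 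Second, splitting $\widetilde G(s_1)-\widetilde G(s_0) = (s_1-s_0) + \int_{s_0}^{s_1}(g(\tanh\sigma)-1)\,d\sigma$, the integral tends to $0$ as $y\to\partial\Bn$ because its integrand tends to $0$ while the interval has length at most $s(|x|)$. Finally I would identify the limit of the linear part using $1-|T_x(y)|^2 = (1-|x|^2)(1-|y|^2)/(1+2x\cdot y+|x|^2|y|^2)$ (from \eqref{eq:TxyTyx}), which gives $s_1-s_0 \to \tfrac12\log\frac{|x+\hat y|^2}{1-|x|^2}$ as $y\to\hat y\in\partial\Bn$, matching the boundary value of $K$.

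The main obstacle is precisely the deviation of $G$ from the pure logarithm $s$, i.e. controlling $\widetilde G(s_1)-\widetilde G(s_0)-(s_1-s_0)$. Because $g$ is merely continuous with $g(1)=1$, the quantity $\widetilde G(s)-s = \int_0^s(g(\tanh\sigma)-1)\,d\sigma$ need not converge as $s\to\infty$, so one cannot simply subtract a limiting constant; the remedy is the bounded-increment estimate $|s_1-s_0|\le s(|x|)$ above, which confines the integral of $g(\tanh\sigma)-1$ to a window of bounded length sliding off to infinity. I would make the whole estimate locally uniform in $x$ (the bound $s(|x|)$ is locally bounded on $\Bn$) so as to upgrade pointwise convergence to genuine joint continuity on $\Bn\times\overline{\Bn}$.
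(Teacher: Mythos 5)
Your proposal is correct and takes essentially the same approach as the paper: the gradient formula is verified exactly as there (interior case via \eqref{eq:Gammagrad}, boundary case by differentiating the logarithm and simplifying with $|y|=1$), and the continuity argument hinges on the same two ingredients, namely the limit of the hyperbolic distance difference $d_\Bn(0,T_x(y))-d_\Bn(0,y)$ computed from \eqref{eq:TxyTyx} (the paper's \eqref{eq:distrelation}) and the fact that $g\to g(1)=1$ on a sliding window of bounded hyperbolic length. The only deviation is cosmetic: where the paper uses the mean-value factorization $\Gamma(T_x(y))-\Gamma(y)=\big(s_1-s_0\big)\fint_{s_0}^{s_1} g\,d\sigma$ with a three-case analysis on the sign of the limiting logarithm, you use the additive split $(s_1-s_0)+\int_{s_0}^{s_1}\big(g(\tanh\sigma)-1\big)\,d\sigma$ together with the explicit triangle-inequality bound $|s_1-s_0|\le s(|x|)$, which handles all sign cases at once and makes the local uniformity in $x$ transparent --- a slight streamlining of the same method, not a different one.
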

\begin{proof}
Clearly $K$ is continuous on the subset where $|x|<1,|y|<1$, and also where $|x|<1,|y|=1$. The issue is to prove continuity when $|x|<1$ and $|y| \to 1$. 

So suppose $|x|<1,|y|<1$ and $(x,y) \to (x_0,y_0)$, where $|x_0|<1,|y_0|=1$. First we show that
\begin{equation} \label{eq:distrelation}
d_\Bn \! \big( 0, T_x(y) \big) - d_\Bn \! \big( 0, y \big) \to \frac{1}{2} \log \frac{|x_0+y_0|^2}{1-|x_0|^2} .
\end{equation}
Indeed, 
\begin{align*}
& d_\Bn \! \big( 0, T_x(y) \big) - d_\Bn \! \big( 0, y \big) \\
& = \frac{1}{2} \log \frac{1+|T_x(y)|}{1-|T_x(y)|} - \frac{1}{2} \log \frac{1+|y|}{1-|y|} \\ 
& = \frac{1}{2} \log \frac{1-|y|^2}{1-|T_x(y)|^2} + \log \frac{1+|T_x(y)|}{1+|y|} \\ 
& = \frac{1}{2} \log \frac{|x|^2 |y|^2 + 2 x \cdot y + 1}{1-|x|^2} + \log \frac{1+|T_x(y)|}{1+|y|} \qquad \text{by using \eqref{eq:TxyTyx} for $|T_x(y)|^2$} \\ 
& \to \frac{1}{2} \log \frac{|x_0+y_0|^2}{1-|x_0|^2} 
\end{align*}
as $(x,y) \to (x_0,y_0)$, by recalling that $|y_0|=1$ and hence $|T_x(y)| \to |T_{x_0}(y_0)| = 1$.

Next we establish the limit:
\begin{equation} \label{eq:Kconvergence}
K(x,y) = \Gamma(T_x(y)) - \Gamma(y) \to \frac{1}{2} \log \frac{|x_0+y_0|^2}{1-|x_0|^2} ,
\end{equation}
which is the desired continuity claim. The proof will use the elementary fact that 
\begin{equation} \label{eq:meanlimit}
\fint_a^b g(r) \, ds(r) \to g(1) = 1
\end{equation}
as $a \to 1$, where $0<a<b<1$ and $ds(r)= (1-r^2)^{-1} \, dr$ is the hyperbolic arclength element.

Suppose to begin with that the logarithm on the right side of \eqref{eq:distrelation} is positive, so that we may suppose $|T_x(y)|>|y|$ as we pass to the limit. By starting with the definition of $\Gamma$ and then multiplying and dividing to get a mean value integral, we find
\begin{align*}
\Gamma(T_x(y)) - \Gamma(y) 
& = \int_{|y|}^{|T_x(y)|} g(r) (1-r^2)^{-1} \, dr \\
& = \int_{|y|}^{|T_x(y)|} \, ds(r) \fint_{|y|}^{|T_x(y)|} g(r) \, ds(r) \quad \ \text{where $e_1=(1,0,\dots,0)$} \\
& \to \frac{1}{2} \log \frac{|x_0+y_0|^2}{1-|x_0|^2}
\end{align*}
as $(x,y) \to (x_0,y_0)$, where in the final line we applied \eqref{eq:distrelation} and \eqref{eq:meanlimit}. The argument is almost identical if the logarithm on the right side of \eqref{eq:Kconvergence} is negative. Lastly, if the logarithm is zero then the proof above continues to apply except at points $(x,y)$ for which $|T_x(y)|=|y|$; but those points already satisfy $\Gamma(T_x(y)) - \Gamma(y) = 0 $, which is the desired limiting value. This completes the proof of \eqref{eq:Kconvergence}, and so the kernel $K$ is continuous. 

When $|y|<1$, the gradient formula \eqref{eq:Kgrad} follows from \eqref{eq:Gammagrad}, since the term $-\Gamma(y)$ in the kernel is independent of $x$.  

Now suppose $|y|=1$. A direct computation yields that 
\begin{equation} \label{eq:spherekernel} 
\nabla_{\! x} K(x,y) 
= \nabla_{\! x} \! \left( \frac{1}{2} \log \frac{|x+y|^2}{1-|x|^2} \right) 
= \frac{1}{1-|x|^2} T_x(y) , 
\end{equation}
where the assumption that $|y|=1$ is used in this calculation to convert $|x+y|^2$ into $1+2x \cdot y + |x|^2|y|^2$, which is the denominator of $T_x(y)$. Since $|T_x(y)|=1$ when $|y|=1$, we have $g(1)T_x(y)=v(T_x(y))$, and so we deduce  \eqref{eq:Kgrad}. 
\end{proof}
A useful transformation property of the kernel is the following. 
\begin{lemma}[M\"{o}bius action on the renormalized kernel when $|y|=1$] \label{le:buseman}
\[
K(T_x(z),y)=K(z,T_x(y))+K(x,y) , \qquad x,z \in \Bn, \quad y \in \partial \Bn .
\]
\end{lemma}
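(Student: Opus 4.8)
The plan is to recognize that all three terms in the asserted identity use only the boundary case of the kernel \eqref{eq:Kdef}: since $|y|=1$ we also have $|T_x(y)|=1$, so $K(T_x(z),y)$, $K(z,T_x(y))$, and $K(x,y)$ are each given by the logarithmic (Busemann) formula
\[
b(p,\xi) := \tfrac{1}{2}\log\frac{|p+\xi|^2}{1-|p|^2} , \qquad p\in\Bn,\ \xi\in\partial\Bn .
\]
Thus the claim reduces to the single cocycle identity $b(T_x(z),y)=b(z,T_x(y))+b(x,y)$, which I would prove by reading $b$ as a Busemann cocycle for the hyperbolic metric and invoking isometry invariance, rather than by grinding through the M\"{o}bius algebra.

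First I would rewrite $b$ as a limit of hyperbolic distances. By the computation \eqref{eq:distrelation} inside \autoref{le:renormkernel}, one has $b(x,y)=\lim_{w\to y}\big[d_\Bn(0,T_x(w))-d_\Bn(0,w)\big]$; since $T_x$ is an isometry with $T_x(0)=x$ and $(T_x)^{-1}=T_{-x}$, the first distance equals $d_\Bn(-x,w)$, so that $b(x,y)=\beta_y(-x,0)$, where
\[
\beta_\xi(p,q):=\lim_{u\to\xi}\big[d_\Bn(p,u)-d_\Bn(q,u)\big]
\]
is the Busemann cocycle based at $\xi\in\partial\Bn$. Existence of this limit (and the explicit log formula) follows from a second application of \eqref{eq:distrelation}, and $\beta_\xi$ has the two properties I will use: additivity $\beta_\xi(p,q)=\beta_\xi(p,r)+\beta_\xi(r,q)$, immediate from the definition, and isometry equivariance $\beta_{g\xi}(gp,gq)=\beta_\xi(p,q)$ for every M\"{o}bius isometry $g$, obtained by substituting $u=gu'$ in the limit and using $d_\Bn(gp,gu)=d_\Bn(p,u)$.

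With these in hand the identity unwinds. Using the elementary relation $-T_x(z)=T_{-x}(-z)$, checked directly from \eqref{eq:Mobius}, I would write the left side as $b(T_x(z),y)=\beta_y(-T_x(z),0)=\beta_y(T_{-x}(-z),0)$. Applying equivariance with $g=T_x$, and using the telescoping $T_x(T_{-x}(-z))=-z$ together with $T_x(0)=x$, this becomes $\beta_{T_x(y)}(-z,x)$. Inserting the intermediate point $0$ by additivity splits it as $\beta_{T_x(y)}(-z,0)+\beta_{T_x(y)}(0,x)$; the first summand is exactly $b(z,T_x(y))$, while a second use of equivariance (with $g=T_x$, noting $T_x(-x)=0$) gives $\beta_{T_x(y)}(0,x)=\beta_{T_x(y)}(T_x(-x),T_x(0))=\beta_y(-x,0)=b(x,y)$. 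Combining the pieces yields the claim.

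The main obstacle is bookkeeping rather than depth: one must track the reflected base point $-x$ that appears in $b(x,y)=\beta_y(-x,0)$ and apply equivariance with the correct isometry and correct points, so that the rotational discrepancy in compositions $T_x\circ T_z$ — which do \emph{not} obey a naive group law when $n\ge 2$ — never enters; only the genuine relations $T_x\circ T_{-x}=\mathrm{id}$, $T_x(0)=x$, $T_x(-x)=0$ are used. As a purely computational alternative one could exponentiate the asserted identity to the equivalent multiplicative form
\[
\frac{|T_x(z)+y|^2}{1-|T_x(z)|^2}=\frac{|z+T_x(y)|^2}{1-|z|^2}\cdot\frac{|x+y|^2}{1-|x|^2}
\]
and verify it directly from \eqref{eq:Mobius}, the standard identity $1-|T_x(z)|^2=(1-|x|^2)(1-|z|^2)/(1+2x\cdot z+|x|^2|z|^2)$, and $|y|=1$; this avoids Busemann functions but is considerably more tedious.
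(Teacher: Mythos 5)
Your proof is correct, but it takes a genuinely different route from the paper. The paper proves the identity algebraically: it exponentiates to the equivalent multiplicative form (exactly the one you list as your ``computational alternative''), simplifies using $1-|T_x(z)|^2=(1-|x|^2)(1-|z|^2)/(1+2x\cdot z+|x|^2|z|^2)$ from \eqref{eq:TxyTyx}, and verifies the resulting polynomial identity by substituting \eqref{eq:Mobius} and using $|y|=1$; it cites Helgason only as a pointer to a geometric proof, without giving one. You instead give that geometric proof in full, and do so economically by recycling the paper's own machinery: \eqref{eq:distrelation} yields $b(x,y)=\beta_y(-x,0)$ and (applied twice, telescoping against the base point $0$) the existence of the Busemann limit $\beta_\xi(p,q)$ under unrestricted approach $u\to\xi$, which is precisely what legitimizes your substitution $u'=gu$ in the equivariance step. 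Your bookkeeping checks out: $-T_x(z)=T_{-x}(-z)$, $T_x(-x)=0$, and $T_x\circ T_{-x}=\mathrm{id}$ all follow from \eqref{eq:Mobius}, and you correctly avoid any appeal to a (false, for $n\geq 2$) group law for the maps $T_x$. The trade-off: the paper's verification is self-contained and free of limit arguments but opaque; yours is conceptual, explains \emph{why} the cocycle identity holds (isometry invariance plus additivity of Busemann functions), and would transfer verbatim to more general negatively curved settings --- at the modest cost of depending on \autoref{le:renormkernel} having been proved first, a dependency the paper's algebraic proof does not have.
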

\begin{proof}
The formula is a known property of the Busemann function for the hyperbolic ball. An elegant geometric proof was given by Helgason \cite[p.\,83, (46)]{H81}. To prove the lemma analytically, notice the logarithmic definition of the kernel from \eqref{eq:Kdef} reduces the problem to showing
\[
 \frac{|T_x(z)+y|^2}{1-|T_x(z)|^2} =  \frac{|z+T_x(y)|^2}{1-|z|^2} \, \frac{|x+y|^2}{1-|x|^2} .
\]
Since 
\[
1-|T_x(z)|^2= \frac{(1-|x|^2)(1-|z|^2)}{1 + 2 x \cdot z + |x|^2 |z|^2} 
\]
by \eqref{eq:TxyTyx}, the desired formula simplifies to
\[
|T_x(z)+y|^2 (1 + 2 x \cdot z + |x|^2 |z|^2) =  |z+T_x(y)|^2 |x+y|^2 ,
\]
which can then be verified by substituting the definition of the M\"{o}bius transformations from \eqref{eq:Mobius}, and using that $|y|=1$.
\end{proof}
Next we show the renormalized kernel is strictly hyperbolically convex with respect to $x$, when $|y|=1$, except along certain directions. Non-strict hyperbolic convexity follows by passing to the limit from the case $|y|<1$ that was treated in \autoref{sec:hyperbolicproof}, but the strictness of the convexity will be important in the next section.  
\begin{proposition} \label{pr:renormconvex}
For each $y \in \partial \Bn$, the kernel 
\[
K(x,y) = \frac{1}{2} \log \frac{|x+y|^2}{1-|x|^2}
\]
is strictly hyperbolically convex with respect to $x \in \Bn$ (that is, strictly convex with respect to hyperbolic arclength along each geodesic), except that it is hyperbolically linear along geodesics emanating from the boundary point $-y$.  
\end{proposition}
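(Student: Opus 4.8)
The plan is to exploit the Möbius covariance of the renormalized kernel recorded in \autoref{le:buseman} in order to reduce the convexity question to geodesics through the origin, where $K(\cdot,y)$ becomes completely explicit. Fix a geodesic $\gamma$ and choose any interior point $x_0\in\gamma$. Since $T_{-x_0}=T_{x_0}^{-1}$ is a hyperbolic isometry carrying $x_0$ to $0$, the curve $\sigma=T_{-x_0}(\gamma)$ is a diameter of the ball and $\gamma=T_{x_0}(\sigma)$. By \autoref{le:buseman}, for $z$ ranging over $\sigma$,
\[
K(T_{x_0}(z),y)=K(z,T_{x_0}(y))+K(x_0,y),
\]
so along $\gamma$ the kernel $K(\cdot,y)$ agrees, up to the additive constant $K(x_0,y)$, with the restriction of $z\mapsto K(z,y')$ to the diameter $\sigma$, where $y'=T_{x_0}(y)\in\partial\Bn$. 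Because $T_{x_0}$ preserves hyperbolic arclength and additive constants do not affect convexity, strict hyperbolic convexity of $K(\cdot,y)$ along $\gamma$ is equivalent to strict hyperbolic convexity of $K(\cdot,y')$ along $\sigma$. Thus it suffices to treat diameters through the origin.

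For the origin case I would parametrize the diameter by hyperbolic arclength, $z(s)=(\tanh s)\,u$ with $u$ a unit vector and $s\in\R$, and set $c=u\cdot y'\in[-1,1]$. Using $|z+y'|^2=\tanh^2 s+2c\tanh s+1$ together with $1-|z|^2=\operatorname{sech}^2 s$, a short simplification (multiply through by $\cosh^2 s$) gives $K(z(s),y')=\tfrac12\log(\cosh 2s+c\sinh 2s)$. Writing $h=\cosh 2s+c\sinh 2s$ one has $h''=4h$, so differentiating twice yields
\[
\frac{d^2}{ds^2}K(z(s),y')=\frac{4h^2-(h')^2}{2h^2}=\frac{2(1-c^2)}{(\cosh 2s+c\sinh 2s)^2},
\]
after the factorization $4h^2-(h')^2=(2h-h')(2h+h')=2(1-c)e^{-2s}\cdot 2(1+c)e^{2s}$. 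This is strictly positive exactly when $|c|<1$; when $c=\pm1$ the argument of the logarithm collapses to $e^{\pm 2s}$ and $K(z(s),y')=\pm s$ is hyperbolically linear. Hence the degenerate directions at the origin are precisely $u=\pm y'$, i.e. the diameter with boundary endpoints $\pm y'$.

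It then remains to transport the degenerate case back through $T_{x_0}$ and confirm that it coincides with ``geodesics emanating from $-y$.'' The diameter with endpoints $\pm y'$ maps under $T_{x_0}$ to the geodesic $\gamma$ whose endpoints are $T_{x_0}(y')$ and $T_{x_0}(-y')$. The key identity here is $T_{-x}(-w)=-T_x(w)$ for $|w|=1$, which follows by direct substitution into \eqref{eq:Mobius} (with $|w|=1$ the two numerators and the two denominators match up to a global sign); applied with $w=y$ it gives $T_{x_0}(-y')=T_{x_0}\big(T_{-x_0}(-y)\big)=-y$. So the degenerate diameter maps exactly onto the geodesic through $x_0$ having $-y$ as one endpoint, and the proposition follows. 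I expect the main obstacle to be precisely this bookkeeping of the exceptional set: the reduction and the one-variable derivative are routine, but one must verify carefully that the linear directions correspond to geodesics ending at $-y$ rather than at $+y$, which is where the sign identity above, and the fact that $K(\cdot,y)$ is the Busemann function at $y$ read at the antipodal point, become essential. (Nonstrict convexity is already available from the interior case by the limiting argument noted before the proposition, so the genuine content is the strictness together with this precise characterization of where it degenerates.)
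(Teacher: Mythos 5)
Your proposal is correct and takes essentially the same route as the paper: both proofs use the M\"{o}bius covariance of \autoref{le:buseman} to reduce convexity along an arbitrary geodesic through $x_0$ to convexity of $K(\cdot,y')$, $y'=T_{x_0}(y)$, along a diameter, and both identify the degenerate directions via the same sign identity $T_{-x}(-y)=-T_x(y)$ as exactly the geodesics with an endpoint at $-y$. The only difference is the final one-variable check: you compute the closed-form second derivative $2(1-c^2)/(\cosh 2s+c\sinh 2s)^2$ in hyperbolic arclength along the entire diameter (with the degenerate case collapsing to the explicit linear function $\pm s$), whereas the paper Taylor-expands $h(t)=K(tz,T_x(y))$ at $t=0$ and reads off $h''(0)=2\bigl(1-(T_x(y)\cdot z)^2\bigr)\geq 0$ pointwise --- an equivalent, slightly less explicit verification.
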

The proposition is due to Millson and Zombro \cite[Corollary 4.4]{MZ96}. A proof is provided below to keep the presentation self-contained. 
\begin{proof}
Fix $x$ and $y$ with $|x|<1$ and $|y|=1$, and consider a geodesic passing through $x$. It suffices to show the second derivative of the kernel with respect to hyperbolic arclength along the geodesic is positive at $x$, or else, is zero if the geodesic has one endpoint at $-y$. The geodesic can be parameterized as $t \mapsto T_x(tz)$ for some unit vector $z$, since the isometry $T_x$ maps geodesics through the origin ($t \mapsto tz$) to geodesics through $x$. 

By invoking \autoref{le:buseman} to evaluate $K(T_x(tz),y)$, we see the goal is equivalent to showing the second hyperbolic derivative of $t \mapsto K(tz,T_x(y))$ is positive at $t=0$, or else, is zero if the geodesic $t \mapsto tz$ has one endpoint at $(T_x)^{-1}(-y)$. That endpoint case means 
\[
\pm z = (T_x)^{-1}(-y) = T_{-x}(-y) = -T_x(y) ,
\]
by using the definition of $T_x$ in \eqref{eq:Mobius}, and so the endpoint case means $z = \pm T_x(y)$. 

Defining 
\[
h(t) = K(tz,T_x(y)) = \frac{1}{2} \log \frac{|tz+T_x(y)|^2}{1-|tz|^2} , \qquad -1<t<1 ,
\]
for positivity of the second hyperbolic derivative we want  
\[
(1-t^2) \left( (1-t^2) h^\prime(t) \right)^\prime \Big|_{t=0} > 0 ,
\]
or else, $=0$ if $z = \pm T_x(y)$. That is, we want $h^{\prime \prime}(0) > 0$, or else, $=0$ if $z = \pm T_x(y)$. Since $|T_x(y)|=1$, an easy calculation gives that 
\[
h(t) = t \, T_x(y) \cdot z + t^2 \big (1-(T_x(y) \cdot z )^2 \big) + O(t^3) .
\]
Remembering $T_x(y)$ and $z$ are unit vectors, we deduce that $h^{\prime \prime}(0) \geq 0$, with equality if and only if $z = \pm T_x(y)$. This completes the proof. 
\end{proof}

\section{\bf Proof of \autoref{th:herschszego} --- center of mass on the closed ball}  \label{sec:herschszegoproof}	

Without loss of generality, we may assume $g(1)=1$ throughout the proof. 

First, observe that the renormalized energy 
\[
\mathcal{E}(x) = \int_{\overline{\Bn}} K(x,y) \, d\mu(y) , \qquad x \in \Bn ,
\]
is finite valued and depends continuously on $x \in \Bn$, since the renormalized kernel $K$ defined in \eqref{eq:Kdef} is continuous by \autoref{le:renormkernel}, and the measure $\mu$ is finite. 

\smallskip
\emph{Aside.} The definition of $K(x,y)$ implies the renormalized energy is formally equal to $\mathcal{E}(x)=E(x)-E(0)$. This formal approach cannot be used as a definition, though, since both $E(x)$ and $E(0)$ might be infinite. That is why we first renormalized the kernel in \autoref{sec:renormalizedenergy} and showed that it extends continuously to $y \in \partial \Bn$, before defining $\mathcal{E}(x)$ as above.

\smallskip
\textbf{Part (a) [Existence].} Differentiating through the integral with the help of \autoref{le:renormkernel} shows that
\begin{align*}
\nabla \mathcal{E}(x) 
& =  \int_{\overline{\Bn}} \nabla_{\! x} K(x,y) \, d\mu(y) \\
& = \frac{1}{1-|x|^2} \int_{\overline{\Bn}} v(T_x(y)) \, d\mu(y) 
= \frac{1}{1-|x|^2} V(x) .
\end{align*}
%

Critical points of the renormalized energy are zeros of $V$. To show the energy has a minimum point, and hence a critical point, we will prove $\mathcal{E}(x) \to \infty$ as $|x| \to 1$.

We use without comment the following facts: $v(r\hat{y})=g(r)\hat{y}$ is continuous on $\overline{\Bn}$;  when $r=1$ we have $g(1)=1$ and so $v(\hat{y})=\hat{y}$; and boundedness of $v$ implies boundedness of $V$. Remember always that $x \in \Bn$. 

First we show the gradient field $V$ points outward when $x$ is near the boundary of the ball. Specifically, we will show a number $\delta>0$ exists such that 
\[
z \cdot V(rz) \geq \delta , \qquad z \in \partial \Bn ,
\]
whenever $r$ is sufficiently close to $1$, say for $\rho < r < 1$. For this it is enough to show $\liminf_{x \to z} z \cdot V(x) > 0$, for each $z \in \partial \Bn$. As $x \to z$, the M\"{o}bius transformation $T_x$ pushes the whole ball except for the antipodal point toward $z$, meaning  $T_x(y) \to z$ for all $y \in \overline{\Bn} \setminus \{ -z \}$, as one may check directly from the definition \eqref{eq:Mobius}. Hence  
\begin{align*}
V(x)
& = \int_{\overline{\Bn} \setminus \{ -z \}} v(T_x(y)) \, d\mu(y) + v(T_x(-z)) \mu(\{ -z \}) \\
& = \left( v(z) \mu(\overline{\Bn} \setminus \{ -z \}) + o(1) \right) + v(T_x(-z)) \mu(\{ -z \}) \quad \text{as $x \to z$} \\
& = z \mu(\overline{\Bn} \setminus \{ -z \}) + T_x(-z) \mu(\{ -z \}) + o(1) ,
\end{align*}
and so  
\begin{align*}
\liminf_{x \to z} z \cdot V(x)
& \geq \mu(\overline{\Bn} \setminus \{ -z \}) - \mu(\{ -z \}) \\
& = 2\e(z) > 0 ,
\end{align*}
where $\e(z) = \frac{1}{2} \mu(\overline{\Bn}) - \mu( \{ -z \} )$, noting that $\e(z)$ is positive by hypothesis \eqref{eq:pointmass}. 

Now that $V$ points outward near the boundary, we could invoke Brouwer's fixed point theorem to conclude that $V$ must vanish somewhere in the ball. Instead, we do a little extra work to show $\mathcal{E}$ approaches infinity at the boundary of the ball. 

By integrating along the ray in direction $z$, one finds when $\rho<r<1$ that 
\begin{align*}
\mathcal{E}(rz)-\mathcal{E}(\rho z)
& = \int_\rho^r z \cdot \nabla \mathcal{E} (tz) \, dt \\
& \geq \delta \int_\rho^r \frac{1}{1-t^2} \, dt \qquad \text{since $\nabla \mathcal{E}(x) = V(x)/(1-|x|^2)$} \\
& = \delta(s(r) - s(\rho)) .
\end{align*}
Hence 
\[
\min_{|x|=r} \mathcal{E} \geq \min_{|x|=\rho} \mathcal{E} - \delta s(\rho) + \delta s(r) \to \infty
\]
as $r \to 1$, because $s(r) \to \infty$. Thus $\mathcal{E}(x)$ tends to infinity as $|x| \to 1$, which completes the proof.  

\smallskip
\textbf{Part (b) [Uniqueness].} Conditions (i) and (ii) each imply that $0 < g(r) \leq g(1)=1$ when $0 < r \leq 1$, and so part (a) guarantees existence of a point $x_c$ at which $\mathcal{E}$ is minimal and hence $V$ vanishes. We will show that critical point is unique by proving $\mathcal{E}$ is strictly hyperbolically convex. 

Start by decomposing the renormalized energy into the contributions from the open ball and the boundary sphere:
\[
\mathcal{E}(x) = \mathcal{E}_{\text{ball}}(x) + \mathcal{E}_{\text{sphere}}(x)
\]
where
\begin{align*}
\mathcal{E}_{\text{ball}}(x) & = \int_\Bn K(x,y) \, d\mu(y) = \int_\Bn \left( \Gamma(T_x(y)) - \Gamma(y) \right) d\mu(y) , \\ 
\mathcal{E}_{\text{sphere}}(x) 
& = \int_{\partial \Bn} K(x,y) \, d\mu(y) = \int_{S^{n-1}} \frac{1}{2} \log \frac{|x+y|^2}{1-|x|^2} \, d\mu(y) .
\end{align*}
This sphere energy was defined already in \eqref{eq:introrenormalized}, in the Introduction. 

\smallskip
Suppose condition (i) holds. When $y \in \Bn$ is fixed, assertion \eqref{eq:convexitycondi} in the proof of \autoref{th:hyperbolic}(b) shows that the map $x \mapsto \Gamma(T_x(y))$ is strictly hyperbolically convex along each geodesic. Hence so is the map $x \mapsto K(x,y) = \Gamma(T_x(y)) - \Gamma(y)$. Therefore $\mathcal{E}_{\text{ball}}(x)$ is strictly hyperbolically convex unless $\mu(\Bn)=0$. Further, when $y \in \partial \Bn$ the map $x \mapsto K(x,y)$ is strictly hyperbolically convex along each geodesic by \autoref{pr:renormconvex}, except that it is hyperbolically linear along geodesics emanating from the boundary point $-y$. Integrating with respect to $y$ shows that $\mathcal{E}_{\text{sphere}}(x)$ is hyperbolically convex, and hence $\mathcal{E}(x)$ is hyperbolically convex. 

Suppose the convexity of $\mathcal{E}$ is non-strict along some hyperbolic geodesic $\gamma$. Then by the preceding paragraph, $\mu(\Bn)=0$ and on $\partial \Bn$ the measure $\mu$ must be supported at the endpoints of $-\gamma$, since for every other point $y \in \partial \Bn$ one has strict hyperbolic convexity of $x \mapsto K(x,y)$ along $\gamma$. Hence $\mu$ must consist of point masses at one or two points on the sphere, which is impossible since hypothesis \eqref{eq:pointmass} says that no point on the boundary can hold half or more of the total mass. Thus $\mathcal{E}(x)$ is strictly hyperbolically convex. 

\smallskip
Now suppose condition (ii) holds. Arguing similarly to condition (i) above, we use the proof of \autoref{th:hyperbolic}(b) to show that $\mathcal{E}_{\text{ball}}(x)$ is hyperbolically convex, and that if the convexity is not strict along some geodesic $\gamma$ then $\mu |_\Bn$ is supported in $-\gamma$. (This exceptional case includes the possibility that $\mu \equiv 0$ on $\Bn$.) Meanwhile, \autoref{pr:renormconvex} guarantees that $\mathcal{E}_{\text{sphere}}(x)$ is hyperbolically convex, and that if the convexity is not strict along $\gamma$ then $\mu$ restricted to the sphere is supported at the endpoints of $-\gamma$. (This exceptional case includes the possibility that $\mu \equiv 0$ on $\partial \Bn$.) 

Adding the two contributions to the energy shows $\mathcal{E}(x)$ is hyperbolically convex. The convexity must be strict along each geodesic $\gamma$ because otherwise the previous paragraph would force $\mu$ to be supported in the closure of $-\gamma$, which is impossible under condition (ii). 

\smallskip
\textbf{Part (c) [Continuous dependence].} Since part (c) assumes either (i) holds or else (ii) holds for $\mu, \mu_1,\mu_2,\mu_3,\dots$, the hypotheses of parts (a) and (b) are satisfied with respect to the measures $\mu$ and $\mu_k$. Write $x_c(\mu)$ for the unique minimum point of the renormalized energy $\mathcal{E}$ corresponding to the measure $\mu$, and $x_c(\mu_k)$ for the unique minimum point of the renormalized energy $\mathcal{E}_k(x) = \int_{\overline{\Bn}} K(x,y) \, d\mu_k(y)$ corresponding to the measure $\mu_k$. 

The weak convergence $\mu_k \to \mu$ implies that $\mu_k(\overline{\Bn}) \to \mu(\overline{\Bn})$. Let $X$ be an arbitrary compact set in $\Bn$. The kernel $K(x,y)$ is uniformly continuous and bounded for $(x,y) \in X \times \overline{\Bn}$, and it follows easily that the family $\{ E_k(x) \}_{k=1}^\infty$ is uniformly equicontinuous on $X$. Therefore the weak convergence $\mu_k \to \mu$ implies that $\mathcal{E}_k(x) \to \mathcal{E}(x)$ pointwise and then, after a short argument using equicontinuity, uniformly for $x \in X$. 
%
%
Now complete the proof as for \autoref{th:hyperbolic}(c), except changing $E$ to $\mathcal{E}$. \qed

\medskip
\noindent 
\emph{Comment on the literature in the spherical case.} The existence and uniqueness proofs above rely on the facts that $\mathcal{E}(x) \to \infty$ as $|x| \to 1$ and $\mathcal{E}(x)$ is strictly hyperbolically convex. For measures supported wholly on the boundary sphere, these facts are due to Millson and Zombro \cite[Propositions 4.5 and 4.8]{MZ96}.  

The proof in their paper that $\mathcal{E}_{\text{sphere}}(x) \to \infty$ relies on \cite[Lemma 4.10]{MZ96}. That lemma seems incorrect as written, because the energy cannot be increasing along all rays out of the origin, unless the minimum point is exactly at the origin. The error in the lemma's proof seems to reside in the assertion that ``$B(\nu)$ clearly lies in the half-ball defined by $u \cdot \gamma < 0$.'' For this assertion the authors seem to want most of the mass of $\nu$ to lie in the half-ball, but there is no reason for that to be the case. Fortunately, the lemma can be rescued by considering only points $x \in \gamma$ that lie sufficiently close to $u$.

\section{\bf Proof of \autoref{th:herschszegosigned} --- center of mass for signed measures}  \label{sec:herschszegoproofsigned}	

\smallskip Part (a). The proof goes exactly as for the proof of \autoref{th:herschszego}(a), except for the following modification due to the measure being signed. When estimating from below the component of $V(x)$ in direction $z$ we must  take the absolute value of the measure of the point mass:
\[
\liminf_{x \to z} z \cdot V(x) \geq \mu(\overline{\Bn} \setminus \{ -z \}) - |\mu(\{ -z \})| = 2\e(z) ,
\]
where
\[
\e(z) = \frac{1}{2} \mu(\overline{\Bn}) - \mu( \{ -z \} )^+ .
\]
If $\mu( \{ -z \} ) \leq 0$ then $\e(z)>0$, since $\mu(\overline{\Bn})>0$ by hypothesis. If $\mu( \{ -z \} ) > 0$ then $\e(z)>0$ by the hypothesis in part (a). Either way we get $\e(z)>0$, and so the proof can be completed as for \autoref{th:herschszego}(a). 

\smallskip Part (b). Apply part (a) with $g(r)=r$ and $\mu$ supported on the sphere, and use that $|T_x(y)|=1$ whenever $|y|=1$ and so $v(T_x(y))=T_x(y)$.

\smallskip Part (c). After replacing $f$ with $-f$ if necessary, we may suppose $\int_{S^{n-1}} f \, dS > 0$. Apply part (b) with $d\mu = f \, dS|_{S^{n-1}}$, noting that each point on the sphere $S^{n-1}$ has $\mu$-measure $0$, since $n \geq 2$. 

\smallskip Part (d). After replacing $f$ with $-f$ if necessary, we may suppose $\int_\Omega f(z) \, |dz|^2 > 0$. Apply part (a) with $n=2$ and with $d\mu$ being the pushforward of $f(z) \, |dz|^2$ under $F$, that is, $\mu(A) = \int_{F^{-1}(A)} f(z) \, |dz|^2$ for Borel sets $A \subset \D$, and $\mu=0$ on the unit circle. 

\smallskip Part (e). After replacing $f$ with $-f$ if necessary, we may suppose $\int_J f(z) \, |dz| > 0$. Apply part (b) with $n=2$ and with $d\mu$ being the pushforward of $f \, |dz|$ under $F$, that is, $\mu(A) = \int_{F^{-1}(A)} f \, |dz|$ for Borel sets $A \subset \partial \D$. Note each point on the circle has $\mu$-measure $0$, since its inverse image under $F$ is a single point on the Jordan curve.

\section{\bf Differential geometric formulation of the center of mass and energy, and relation to the Riemannian (Karcher) center of mass}  \label{sec:diffgeometric}	

This paper takes advantage of the Poincar\'{e} ball model of hyperbolic space, and generally employs the euclidean radius $r$ rather than the hyperbolic radius $s=s(r)=\arctanh r$. Formulas are stated in terms of the euclidean length $|T_x(y)|$ rather than the hyperbolic distance from $-x$ to $y$. To express the center of mass and energy in a more intrinsic, differential geometric fashion, such as used in \cite[Lemma 4.11]{ABC09} and elsewhere, one may proceed as follows for a measure $\mu$ supported in the open ball $\Bn$. 

\subsection*{Geometric formulation of the center of mass}
We claim that the vector field 
\[
V(-x) = \int_\Bn g(|T_{-x}(y)|) \frac{T_{-x}(y)}{|T_{-x}(y)|} \, d\mu(y) ,
\] 
whose vanishing determines the center of mass point, can be rewritten as 
\begin{equation} \label{eq:diffgeom}
V(-x) = \int_\Bn \widetilde{g}(d_\Bn(x,y)) \frac{\exp_x^{-1}(y)}{d_\Bn(x,y)} \, d\mu(y) 
\end{equation}
where $\exp_x$ is the exponential map from the tangent space at $x$ into $\Bn$, and $\widetilde{g}(s)=g(r)$. 

\noindent \emph{Proof of \eqref{eq:diffgeom}.} Observe first that 
\begin{equation} \label{eq:dseq}
d_\Bn(x,y) = d_\Bn(0,T_{-x}(y)) = s(|T_{-x}(y)|) ,
\end{equation}
since $T_{-x}$ is a hyperbolic isometry that maps $x$ and $y$ to the points $0$ and $T_{-x}(y)$, respectively. Hence $\widetilde{g}(d_\Bn(x,y)) = g(|T_{-x}(y)|)$. Next, the exponential map preserves distances and so $|\exp_x^{-1}(y)|=d_\Bn(x,y)$, which means the vector $\exp_x^{-1}(y)$ in \eqref{eq:diffgeom} becomes a unit vector after dividing by the distance. 

To finish proving \eqref{eq:diffgeom}, we need only show the vectors $T_{-x}(y)$ and $\exp_x^{-1}(y)$ point in the same direction. That is, we want the vector $T_{-x}(y)$ to be tangential at $x$ to the geodesic going from $x$ to $y$. That geodesic has parameterization $t \mapsto T_x(tT_{-x}(y))$, with $t=0$ giving $x$ and $t=1$ giving $y$. The tangent vector is
\[
\frac{d\ }{dt} T_x(tT_{-x}(y)) \Big|_{t=0} = (1-|x|^2) T_{-x}(y) .
\]
Thus $T_{-x}(y)$ is tangential at $x$ to the geodesic from $x$ to $y$, as we wanted to show. \qed

\subsection*{Geometric formulation of the energy}
Recall the function $\widetilde{G}(s)=G(r)$. Observe 
\[
\widetilde{G} \big( d_\Bn(x,y) \big) = G(|T_{-x}(y)|) = \Gamma(T_{-x}(y)) 
\]
by \eqref{eq:dseq}, and so the energy functional can be rewritten in terms of the distance function as 
\[
E(-x) = \int_\Bn \widetilde{G}\big( d_\Bn(x,y) \big) \, d\mu(y) , \qquad x \in \Bn .
\]

\subsection*{Riemannian center of mass}
A good account of the Riemannian center of mass as developed by Cartan, Grove, Karcher and others can be found in Jost's book \cite[Chapter 6]{J17} for a nonpositively curved, complete, simply connected manifold $M$ with distance function $d(x,y)$. For comments on the history, and extensions to other manifolds, see Karcher \cite{K14} and Afsari \cite{A11}. The highlights for us are that if the energy
\[
E_2(x) = \int_M \frac{1}{2} d(x,y)^2 \, d\mu(y)
\]
is finite, then it is a strictly convex function of $x \in M$ along each geodesic \cite[Lemma 6.9.5]{J17}, and hence the energy has a unique minimizing point, which satisfies a center of mass equation \cite[Theorem 6.9.4]{J17}. These results build on convexity of the distance function \cite[Corollary 6.9.2]{J17}. 

The Riemannian energy corresponds to the quadratic choice $\widetilde{G}(s)=\frac{1}{2} s^2$, giving $E_2(x)=E(-x)$. For the hyperbolic ball, the corresponding weight is $g(r)=(1-r^2)G^\prime(r)=\widetilde{G}^\prime(s)=s=\arctanh r$. Thus for this choice of strictly increasing weight function, the Riemannian center of mass theorem yields existence and uniqueness of the center of mass in \autoref{th:hyperbolic}(a)(b), for finite measures on the open ball. Afsari \cite[Theorem 2.1]{A11} has treated the $L^p$ energy $E_p(x)$, for which $g(r)=(\arctanh r)^{p-1}$. In the constant-weight case $p=1$, he noted the possible nonuniqueness of the center of mass for measures supported in a geodesic. These authors presumably recognized that existence and uniqueness continue to hold for any strictly increasing weight $g(r)$. 

\section*{Acknowledgments}
This research was supported by a grant from the Simons Foundation (\#429422 to Richard Laugesen) and the University of Illinois Research Board (RB19045). I am grateful to  Alexandre Girouard and Jeffrey Langford for stimulating conversations about center of mass results. Mark Ashbaugh generously provided considerable assistance with understanding the literature.

\appendix

\section{\bf Convexity of the hyperbolic distance from the origin}  \label{sec:geometricfact}	

The following fact was needed in the proof of \autoref{th:hyperbolic}(b), for uniqueness
\begin{lemma} \label{le:hypgeo}
The hyperbolic distance from the origin, which is 
\[
d_\Bn(x,0) = s(|x|) = \frac{1}{2} \log \frac{1+|x|}{1-|x|} , 
\]
is a hyperbolically convex function of $x \in \Bn$.

In fact, $d_\Bn(x,0)$ is strictly hyperbolically convex along each geodesic that does not pass through the origin. On the straight line geodesics passing through the origin, $d_\Bn(x,0)$ is hyperbolically linear on each side of the origin, with its graph having a corner at the origin.
\end{lemma}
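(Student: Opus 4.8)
The plan is to reduce the statement to a one-variable convexity problem by measuring everything along the geodesic in terms of hyperbolic arclength and invoking the Pythagorean theorem of hyperbolic trigonometry. I would treat the two cases of the lemma separately, starting with the degenerate one.

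First, for a geodesic $\gamma$ passing through the origin (a diameter), parameterize it by signed hyperbolic arclength $\tau \in \R$, so that $\gamma(\tau) = \tanh(\tau)\,\hat u$ for a fixed unit vector $\hat u$. Then $d_\Bn(\gamma(\tau),0) = \arctanh|\tanh\tau| = |\tau|$, which is convex, is hyperbolically linear with slope $\mp 1$ on the two sides of the origin, and has a corner at $\tau = 0$. This settles the origin-geodesic case and shows the convexity there is non-strict, as claimed.

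Second, for a geodesic $\gamma$ \emph{not} through the origin, note that $d_\Bn(\gamma(\tau),0) \to \infty$ as $\gamma(\tau) \to \partial \Bn$, i.e. as $\tau \to \pm\infty$, so the continuous function $\tau \mapsto d_\Bn(\gamma(\tau),0)$ attains a minimum at some point $q = \gamma(0)$, where hyperbolic arclength is now measured from $q$; by the first variation of arclength the geodesic from $0$ to $q$ meets $\gamma$ orthogonally at $q$. Set $d_0 = d_\Bn(0,q) > 0$. The three points $0$, $q$, $\gamma(\tau)$ span a totally geodesic hyperbolic plane and form a right triangle with the right angle at $q$, with legs $d_0$ and $|\tau|$ and hypotenuse $D(\tau) := d_\Bn(\gamma(\tau),0)$. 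Because this paper's metric is one half of the standard Poincar\'e metric (indeed $d_\Bn(0,x) = \arctanh|x| = \tfrac{1}{2}\log\frac{1+|x|}{1-|x|}$, corresponding to curvature $-4$), the hyperbolic Pythagorean theorem takes the form
\[
\cosh\!\big(2 D(\tau)\big) = \cosh(2 d_0)\,\cosh(2\tau) .
\]
Writing $K = \cosh(2 d_0) \geq 1$ and differentiating gives $D'(\tau) = \frac{K\sinh(2\tau)}{\sqrt{K^2\cosh^2(2\tau) - 1}}$; substituting $w = \sinh(2\tau)$, an increasing function of $\tau$, recasts this as $\frac{Kw}{\sqrt{(K^2-1)+K^2 w^2}}$, whose $w$-derivative equals
\[
\frac{K(K^2-1)}{\big((K^2-1)+K^2 w^2\big)^{3/2}} .
\]
This is strictly positive exactly when $K > 1$, that is, when $d_0 > 0$, so $D'$ is strictly increasing and $D$ is strictly hyperbolically convex along $\gamma$. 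When $d_0 = 0$ the identity degenerates to $D(\tau) = |\tau|$, recovering the first case. Combining the two cases yields hyperbolic convexity of $d_\Bn(\cdot,0)$ along every geodesic, with strictness precisely off the origin-geodesics, which is the assertion of the lemma.

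The main obstacle is the structural step rather than the calculus: establishing the Pythagorean relation with the correct constant (tracking the factor of $2$ that comes from this paper's metric normalization) and justifying that the geodesic triangle lies in a totally geodesic copy of $\HH^2$, so that planar hyperbolic trigonometry legitimately applies. One should also check that the minimizing point $q$ exists and that orthogonality follows by first variation, so that the argument is not circular; notably I do not need $q$ to be \emph{unique} in advance, since the resulting strict convexity confirms uniqueness a posteriori. As an alternative that stays entirely inside the Poincar\'e ball model, the relation for $D(\tau)$ can be verified directly from the explicit circular-arc parameterization used in \autoref{sec:hyperbolicproof}, at the cost of a longer but elementary computation.
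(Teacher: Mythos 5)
Your proof is correct, but it takes a genuinely different route from the paper. The paper argues locally and entirely inside the ball model: it parameterizes the geodesic through an arbitrary point $x$ as $t \mapsto T_x(tz)$, Taylor-expands the M\"{o}bius formula \eqref{eq:Mobius} to second order, and reads off that the second hyperbolic derivative of $d_\Bn(\cdot,0)$ at $x$ equals $\tfrac{1+|x|^2}{|x|}\bigl(1-(\hat{x}\cdot z)^2\bigr)$, which is positive precisely because $z \neq \pm\hat{x}$ when the geodesic misses the origin. You instead argue synthetically: drop the perpendicular from $0$ to $\gamma$ at the minimizing foot $q$, invoke the hyperbolic Pythagorean theorem in the totally geodesic copy of $\HH^2$ spanned by $0$ and $\gamma$, and reduce to the one-variable calculus of $D(\tau)=\tfrac12\operatorname{arccosh}\bigl(\cosh(2d_0)\cosh(2\tau)\bigr)$. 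Your bookkeeping is right where it most needs to be: the paper's normalization $ds=(1-r^2)^{-1}dr$ is half the standard Poincar\'{e} metric, hence curvature $-4$, which is exactly why the doubled arguments $\cosh(2D)=\cosh(2d_0)\cosh(2\tau)$ are correct; your derivative computation $D'=Kw\big((K^2-1)+K^2w^2\big)^{-1/2}$ with $w=\sinh(2\tau)$, and its strict monotonicity iff $K>1$, checks out, and the degenerate case $K=1$ correctly collapses to $D(\tau)=|\tau|$, unifying both halves of the lemma. What each approach buys: yours yields a global closed-form for the distance along the geodesic and a conceptual explanation of strictness (the perpendicular distance $d_0>0$), and it handles all of $\gamma$ at once rather than point by point; the cost is that you must import standard but nontrivial facts (existence of the totally geodesic plane, first variation giving orthogonality at $q$, the curvature-adjusted Pythagorean law), whereas the paper's computation is self-contained, using nothing beyond the explicit M\"{o}bius formula, at the cost of a less illuminating expansion. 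Both establish strictness in the same logical form (a strictly increasing first derivative, equivalently a positive second derivative, in hyperbolic arclength), so your proof is a legitimate substitute for the appendix argument.
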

Geometrically elegant proofs are presented by Thurston \cite[Theorem 2.5.8]{T97} and Martelli \cite[Proposition 2.4.4]{M16}. For a generalization to CAT(0) spaces, see Bridson and Haefliger \cite[Chapter II.2]{BH99}. An analytic proof of the lemma is given below. 

The distance from any other point in the ball is hyperbolically convex too, as one deduces by applying an isometry. That is, for each $y \in \Bn$, the map $x \mapsto d_\Bn(x,y)$ is hyperbolically convex.   

The euclidean analogue of the lemma says the distance $|x|$ from the origin is strictly convex along each straight line that avoids the origin, and on straight lines that do pass through the origin, $|x|$ is linear on each side of its corner point at the origin. 
\begin{proof}[Proof of \autoref{le:hypgeo}]
The result is immediate when the geodesic is a straight line through the origin with direction vector $z$, since $d_\Bn(tz,0)=s(|t|)$ for $t \in (-1,1)$, and this function has a corner in its graph at $t=0$. 

Now suppose the geodesic $\gamma$ does not pass through the origin. Consider an arbitrary point $x \in \gamma$, so that $0<|x|<1$. We will show the second derivative of $d_\Bn(\cdot,0)$ along $\gamma$ is positive at $x$. 

The geodesic can be parameterized as $t \mapsto T_x(tz)$ for some unit vector $z$, since the isometry $T_x$ maps geodesics through the origin ($t \mapsto tz$) to geodesics through $x$. Notice $z \neq \pm x/|x|$, since $\gamma$ is not a straight line through the origin. The definition of $T_x$ in \eqref{eq:Mobius} implies that 
\[
T_x(tz) 
= x + t (1-|x|^2) z + t^2 (1-|x|^2) (x - 2 (x \cdot z)z) + O(t^3)
\]
as $t \to 0$. A straightforward calculation shows that 
\[
\begin{split}
& \frac{1+|T_x(tz)|}{1-|T_x(tz)|} \\
& = \frac{1+|x|}{1-|x|} \left( 1 + 2t(\hat{x} \cdot z) + 2t^2 \left[ \frac{1+|x|^2}{2|x|}(1-(\hat{x} \cdot z)^2) + (\hat{x} \cdot z)^2 \right] + O(t^3) \right) ,
\end{split}
\]
where $\hat{x}=x/|x|$. Taking the logarithm of each side, one finds
\[
d_\Bn(T_x(tz),0) = d_\Bn(x,0) + t(\hat{x} \cdot z) + t^2 \frac{1+|x|^2}{2|x|} \left( 1 - (\hat{x} \cdot z)^2 \right) + O(t^3) .
\]
Because $z \neq \pm \hat{x}$ we know $|\hat{x} \cdot z|<1$. Hence the second derivative is positive at $t=0$, 
\[
\left. \frac{d^2\ }{dt^2} \, d_\Bn(T_x(tz),0) \right|_{t=0} > 0 ,
\]
which is equivalent to 
\[
\left. (1-t^2) \frac{d\ }{dt} \left( (1-t^2) \frac{d\ }{dt} \, d_\Bn(T_x(tz),0) \right) \right|_{t=0} > 0 .
\]
That is, $d_\Bn(T_x(\cdot),0)$ is strictly convex at the origin along the straight line geodesic in direction $z$, and so $d_\Bn(\cdot,0)$ is strictly hyperbolically convex along $\gamma$ at $x$.
\end{proof}

\bibliographystyle{plain}

\end{document}